\let\C\undefined
\newtheorem{proposition}{Proposition}[section]
\newtheorem{theorem}[proposition]{Theorem}
\newtheorem{lemma}[proposition]{Lemma}
\theoremstyle{definition}
\newtheorem{definition}[proposition]{Definition}
\newcommand{\defeq}{\coloneqq}
\newcommand{\Nset}{\mathbb{N}}
\newcommand{\Zset}{\mathbb{Z}}
\newcommand{\Rset}{\mathbb{R}}
\newcommand{\Sset}{\mathbb{S}}
\newcommand{\Bset}{\mathbb{B}}
\newcommand{\Hset}{\mathbb{H}}
\newcommand{\dif}{\,\mathrm{d}}
\newcommand{\compose}{\,\circ\,}
\newcommand{\manifold}[1]{\mathcal{#1}}
\DeclarePairedDelimiter{\brk}{(}{)}
\DeclarePairedDelimiter{\abs}{\lvert}{\rvert}
\DeclarePairedDelimiter{\norm}{\lVert}{\rVert}
\DeclarePairedDelimiter{\floor}{\lfloor}{\rfloor}
\DeclarePairedDelimiterX{\intvc}[2]{[}{]}{#1,#2}
\DeclarePairedDelimiterX{\intvl}[2]{(}{]}{#1,#2}
\DeclarePairedDelimiterX{\intvr}[2]{[}{)}{#1,#2}
\DeclarePairedDelimiterX{\intvo}[2]{(}{)}{#1,#2}
\DeclarePairedDelimiterX{\setcond}[2]{\{}{\}}{#1 \,\delimsize\vert\, #2}
\newcommand{\restr}[1]{\!\upharpoonright_{#1}}
\newcommand{\Deriv}{\mathrm{D}}
\newcommand{\meanosc}{\mathrm{MO}}
\providecommand{\st}{\,\vert\,}
\newcommand\stSymbol[1][]{%
\nonscript\;#1\vert
\allowbreak
\nonscript\;
\mathopen{}}
\DeclarePairedDelimiterX\set[1]\{\}{%
\renewcommand\st{\stSymbol[\delimsize]}
#1
}
\DeclareMathOperator{\dist}{dist}
\DeclareMathOperator{\supp}{supp}
\DeclareMathOperator{\diam}{diam}
\DeclareMathOperator{\tr}{tr}
\renewcommand{\PrintDOI}[1]{%
  \href{http://dx.doi.org/#1}{doi:#1}%
}
\title[Extension of critical Sobolev mappings]
{%
Singular extension of critical Sobolev mappings
under an exponential weak-type estimate%
}
\author{Bohdan Bulanyi}
\author{Jean Van Schaftingen}
\keywords{Extension of traces in Sobolev spaces;
trace theory;
Sobolev embedding theorem;
weak-type Marcinkiewicz spaces;
Lorentz space}
\subjclass[2020]{58D15 (46E35)}
\begin{document}

\address{
Universit\`{a} di Bologna, Dipartimento di Matematica, Piazza di Porta S. Donato 5, 40126 Bologna, Italy}

\email{Bohdan.Bulanyi@unibo.it}

\address{
Universit\'e catholique de Louvain, Institut de Recherche en Math\'ematique et Physique, Chemin du Cyclotron 2 bte L7.01.01, 1348 Louvain-la-Neuve, Belgium}
\email{Jean.VanSchaftingen@UCLouvain.be}

\begin{abstract}
Given $m \in \mathbb{N} \setminus \{0\}$ and a compact Riemannian manifold $\mathcal{N}$, we construct for every map $u$ in the critical Sobolev space $W^{m/(m + 1), m + 1} (\mathbb{S}^m, \mathcal{N})$,
a map $U : \mathbb{B}^{m + 1}_{1} \to \mathcal{N}$ whose trace is $u$ and which satisfies an exponential weak-type Sobolev estimate.
The result and its proof carry on to the extension to a half-space of maps on its boundary hyperplane and to the extension to the hyperbolic space of maps on its boundary sphere at infinity.
\end{abstract}

\thanks{The authors were supported by the Projet de Recherche T.0229.21 ``Singular Harmonic Maps and Asymptotics of Ginzburg--Landau Relaxations'' of the
Fonds de la Recherche Scientifique--FNRS}

\maketitle

\tableofcontents

\section{Introduction}

The classical trace theory in Sobolev spaces \cite{Gagliardo_1957} states that for every  \(m \in \Nset\setminus \set{0}\) and \(p \in \intvo{1}{\infty}\), there is a well-defined surjective trace operator \(\tr_{\Sset^{m}} : W^{1, p} (\Bset^{m + 1}_{1}, \Rset)\to W^{1-1/p, p} (\Sset^{m}, \Rset)\) that coincides with the restriction on the subsets of continuous functions, where the first-order Sobolev space on the unit ball \(\Bset^{m + 1}_{1} \subset \Rset^{m + 1}\) is defined as 
\begin{equation*} 
 W^{1, p} (\Bset^{m + 1}_{1}, \Rset)
 \defeq \set[\Big]{U : \Bset^{m + 1}_{1} \to \Rset \st U \text{ is weakly differentiable and }
 \int_{\Bset^{m + 1}_{1}} \abs{\Deriv U}^p < \infty}
\end{equation*}
and the fractional Sobolev space \(W^{s, p} (\Sset^m, \Rset)\) on the unit sphere \(\Sset^m\defeq \partial \Bset_{1}^{m+1}\) is defined for \(s \in \intvo{0}{1}\) as
\begin{equation}\label{def of fractsobspace}
 W^{s, p} (\Sset^{m}, \Rset)
 \defeq \set[\bigg]{u : \Sset^{m} \to \Rset \st u \text{ is measurable and }
 \smashoperator{\iint_{\Sset^{m}\times \Sset^{m}}} \frac{\abs{u (x) - u (y)}^p}{\abs{x - y}^{m + sp}}\dif x \dif y < \infty}.
\end{equation}
Moreover, the trace operator \(\tr_{\Sset^{m}}\) has a continuous linear right inverse, so that, in particular, for every function \(u \in W^{1-1/p, p} (\Sset^{m}, \Rset)\) there exists a function \(U \in W^{1, p} (\Bset^{m + 1}_{1}, \Rset) \) such that \(\tr_{\Sset^m} U = u\)  and 
\begin{equation}
\label{eq_ohd8echeen0iuc1aXi8xi5uC}
  \int_{\Bset^{m + 1}_{1}} \abs{\Deriv U}^p
  \le C
  \smashoperator{\iint_{\Sset^{m}\times \Sset^{m}}} \frac{\abs{u (x) - u (y)}^p}{\abs{x - y}^{m + p - 1}}\dif x \dif y,
\end{equation}
where the constant \(C \in \intvo{0}{\infty}\) in \eqref{eq_ohd8echeen0iuc1aXi8xi5uC} depends only on \(m\) and \(p\).

Given a compact Riemannian manifold \(\manifold{N}\), which we can assume without loss of generality to be isometrically embedded into \(\Rset^\nu\) by Nash's isometric embedding theorem \cite{Nash_1956}, we consider the Sobolev spaces of mappings
\begin{equation*}
  W^{1, p} (\Bset^{m + 1}_{1}, \manifold{N})
  \defeq \set{U \in W^{1, p} (\Bset^{m+ 1}_{1}, \Rset^\nu) \st U \in \manifold{N} \text{ almost everywhere in \(\Bset^{m+ 1}_{1}\)}}
\end{equation*}
and
\begin{equation*}
  W^{s, p} (\Sset^{m}, \manifold{N})
  \defeq \set{u \in W^{s, p} (\Sset^{m}, \Rset^\nu) \st u \in \manifold{N} \text{ almost everywhere in \(\Sset^{m}\)}}.
\end{equation*}
Although it follows immediately from the classical trace theory that the trace operator is well defined and continuous from \( W^{1, p} (\Bset^{m + 1}_{1}, \manifold{N})\) to \(W^{1 - 1/p, p} (\Sset^{m}, \manifold{N})\), the proof of the surjectivity fails to extend to this case:
one can just prove that every mapping in \(W^{1 - 1/p, p} (\Sset^m, \manifold{N})\) can be extended to a function in \(W^{1, p} (\Bset^{m + 1}_{1}, \Rset^\nu)\); since the extension is constructed by convolution, there is no hope that the extension would satisfy the nonlinear manifold constraint in general.

In the case of subcritical dimension \(m < p - 1\), maps in the nonlinear Sobolev spaces \(W^{1 - 1/p, p} (\Sset^m, \manifold{N})\) and \(W^{1, p} (\Bset^{m + 1}_{1}, \mathcal{N})\) can be assumed to be continuous by the Sobolev--Morrey embedding; it turns out that all maps in \(W^{1 - 1/p, p} (\Sset^m, \manifold{N})\) are traces of maps in \(W^{1, p} (\Bset^{m + 1}_{1}, \manifold{N})\)
if and only if all continuous maps from \(\Sset^m\) to \(\manifold{N}\) are restrictions of continuous mappings, or equivalently, if and only if the \(m^{\text{th}}\) homotopy group \(\pi_{m} (\manifold{N})\) is trivial: \(\pi_{m} (\manifold{N}) \simeq \set{0}\) \cite{Bethuel_Demengel_1995}*{Th.\ 1}.
For the critical dimension \(m = p - 1\), one gets similarly that all mappings in \(W^{m/(m + 1), m + 1} (\Sset^m, \manifold{N})\) are traces of maps in \(W^{1, m + 1} (\Bset^{m + 1}_{1}, \manifold{N})\) if and only if \(\pi_{m} (\manifold{N}) \simeq \set{0}\) \cite{Bethuel_Demengel_1995}*{Th.\ 2}; this can be explained by the vanishing mean oscillation property of maps in \(W^{m/(m + 1), m + 1} (\Sset^m, \manifold{N})\) and the possibility to extend homotopy and obstruction theories to such maps \cite{Brezis_Nirenberg_1995}.

The situation is radically different in the supercritical dimension case \(m > p - 1\), where it has been proved in a succession of works among other results that
the trace operator is surjective from \( W^{1, p} (\Bset^{m + 1}_{1}, \manifold{N})\) to \( W^{1, 1 - 1/p} (\Sset^{m}, \manifold{N})\)
if and only if the homotopy groups \(\pi_{1} (\manifold{N}), \dotsc, \pi_{\floor{p - 2}} \brk{\manifold{N}}\) are finite and
\(\pi_{\floor{p - 1}} \brk{\manifold{N}}\) is trivial \citelist{\cite{Hardt_Lin_1987}\cite{Bethuel_Demengel_1995}\cite{Bethuel_2014}\cite{Mironescu_VanSchaftingen_2021_Trace}\cite{VanSchaftingen_Extension}}.

Going back to the case of subcritical or critical dimension \(m \le p - 1\), one can wonder whether the \emph{linear} estimate \eqref{eq_ohd8echeen0iuc1aXi8xi5uC} can still hold for the extension of Sobolev mappings.
It has been proved that every map \(u \in W^{1 - 1/p, p} (\Sset^m, \manifold{N})\) has an extension \(U \in W^{1, p} (\Bset^{m + 1}_{1}, \manifold{N})\) with the estimate \eqref{eq_ohd8echeen0iuc1aXi8xi5uC} when either \(m = 1\) and \(\pi_1 (\manifold{N}) \simeq \set{0}\) or \(m \ge 2\), \(\pi_1 (\manifold{N})\) is finite and \(\pi_2 (\manifold{N}) \simeq \dotsb \simeq \pi_{\floor{p}} (\manifold{N}) \simeq \set{0}\) \citelist{\cite{Hardt_Lin_1987}\cite{VanSchaftingen_2020}};
moreover, if there is an extension satisfying \eqref{eq_ohd8echeen0iuc1aXi8xi5uC}, then the homotopy groups \(\pi_{1} (\manifold{N}), \dotsc, \pi_{\floor{p - 1}} (\manifold{N})\) have all to be finite \citelist{\cite{Bethuel_2014}\cite{Mironescu_VanSchaftingen_2021_Trace}};
in the critical case \(m = p - 1\), the additional condition that \(\pi_{m}(\manifold{N}) \simeq \set{0}\) is necessary for a Sobolev estimate on the extension of continuous mappings that have a continuous extension \cite{Mironescu_VanSchaftingen_2021_Trace}.

In view of the obstructions to linear Sobolev estimates on the extension of the form \eqref{eq_ohd8echeen0iuc1aXi8xi5uC}, one can hope to get some \emph{nonlinear} estimates instead.
In the subcritical dimension case, a compactness argument \cite{Petrache_VanSchaftingen_2017}*{Th.~4} shows that given \(m \in \Nset \setminus \set{0}\), \(p > m + 1\) and a compact Riemannian manifold \(\manifold{N}\), there exists a function \(\gamma \in C (\intvr{0}{\infty}, \intvr{0}{\infty})\) such that \(\gamma (0) = 0\), and every map \(u \in W^{1 - 1/p, p} (\Sset^m, \manifold{N})\) that has a continuous extension has an extension \(U \in W^{1, p} (\Bset^{m+1}_{1}, \manifold{N})\) satisfying
\begin{equation}
\label{eq_shaequunah6UShaiSoo5eepi}
 \int_{\Bset^{m + 1}_{1}} \abs{\Deriv U}^p
  \le \gamma\brk[\bigg]{\;
  \smashoperator[r]{\iint_{\Sset^{m}\times \Sset^{m}}} \frac{d(u (x),u (y))^p}{\abs{x - y}^{m + p - 1}}\dif x \dif y},
\end{equation}
where \(d\) is the geodesic distance on \(\manifold{N}\). 

Because of bubbling phenomena, this estimate \eqref{eq_shaequunah6UShaiSoo5eepi} does not go to the endpoint \(p = m + 1\) when \(\pi_{m} (\manifold{N}) \not \simeq\{0\}\). 
Indeed, any map in \(W^{m/(m + 1), m + 1} (\Sset^m, \manifold{N})\) is the weak limit of continuous maps in \(W^{m/(m + 1), m + 1} (\Sset^m, \manifold{N})\) that can be extended so that a weak compactness argument would imply that every map  in \(W^{m/(m + 1), m + 1} (\Sset^m, \manifold{N})\) has an extension \(W^{1, m + 1} (\Bset^{m+1}_{1}, \manifold{N})\), which cannot be the case when \(\pi_{m} (\manifold{N}) \not \simeq \set{0}\) (see \cite{Petrache_Riviere_2014}*{Prop.~ 2.8}).
In this situation, it has been proved in works by Petrache, Rivière and the second author that there is still an extension \(U \in W^{1, (m + 1, \infty)} (\Bset^{m+1}_{1}, \manifold{N})\) satisfying a weak-type Marcinkiewicz--Sobolev estimate \citelist{\cite{Petrache_Riviere_2014}\cite{Petrache_VanSchaftingen_2017}*{Th.\ 2}}: for every \(t \in \intvo{0}{\infty}\),
\begin{equation}
\label{eq_eeyahGie3shee1ahth5luth9}
 t^{m + 1} \mathcal{L}^{m+1} \brk[\big]{\set{x \in \Bset^{m + 1}_{1} \st \abs{\Deriv U(x)} > t }} 
  \le \gamma\brk[\bigg]{\;
  \smashoperator[r]{\iint_{\Sset^{m}\times \Sset^{m}}} \frac{d (u (x), u (y))^{m+1}}{\abs{x - y}^{2 m}}\dif x \dif y}.
\end{equation}

The function \(\gamma\), appearing in the estimate \eqref{eq_eeyahGie3shee1ahth5luth9} for a general target manifold \(\manifold{N}\), is a  wild \emph{double exponential} function \cite{Petrache_VanSchaftingen_2017}*{discussion after Th.\ 2}.
Petrache and Rivière get similar estimates with \(\gamma\) being a polynomial when \(m = 2\) and \(\manifold{N} = \Sset^2\) \cite{Petrache_Riviere_2014}*{Th.~C} (thanks to the Hopf fibration) and an exponential of a power when \(m = 3\) and \(\manifold{N} = \Sset^3\) \cite{Petrache_Riviere_2014}*{Th.~B}.

In the present work, we construct an extension \(U\) that satisfies \eqref{eq_eeyahGie3shee1ahth5luth9}, where \(\gamma\) can be taken to be an exponential function.

\begin{theorem}
\label{theorem_ball}
Let \(m \in \Nset \setminus \set{0}\) and let \(\manifold{N}\) be a compact Riemannian manifold.
There exist constants \(A, B, \delta \in \intvo{0}{\infty}\) such that 
for every \(u \in W^{m/(m + 1), m + 1} (\Sset^m, \manifold{N})\), there
exists a mapping \(U \in W^{1, 1} (\Bset^{m + 1}_{1}, \manifold{N})\)
such that \(\tr_{\Sset^m} U = u\) and for every \(t \in \intvo{0}{\infty}\),
\begin{multline}
\label{eq_Ufau0geiGh1Eang5voozehai}
 t^{m + 1}\mathcal{L}^{m + 1} \brk{\set{ x \in \Bset^{m + 1}_{1} \st \abs{\Deriv U (x)}\ge t}}\\
 \le A \exp \brk[\Bigg]{\hspace{1em}B
  \smashoperator{
    \iint_{\substack{
      (x, y) \in \Sset^m \times \Sset^m\\
      d (u (x), u (y))\ge \delta
      }}}
 \frac{1}{\abs{x - y}^{2m}} \dif x \dif y
 }
 \smashoperator[r]{\iint_{\Sset^m \times \Sset^m}}
 \frac{d(u (x), u(y))^{m + 1}}{\abs{x - y}^{2m}} \dif x \dif y.
\end{multline}
Moreover, one can take \(U\in C(\Bset^{m + 1}_{1}\setminus S, \manifold{N})\), where the singular set \(S \subset \Bset^{m + 1}_{1}\) is a finite set whose cardinality is controlled by the right-hand side of \eqref{eq_Ufau0geiGh1Eang5voozehai}.
\end{theorem}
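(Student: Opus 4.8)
\emph{Averaged extension.} Write $x \in \Bset^{m + 1}_{1} \setminus \set{0}$ as $x = (1 - \rho)\sigma$ with $\sigma \defeq x / \abs{x} \in \Sset^m$ and $\rho \defeq 1 - \abs{x} \in \intvo{0}{1}$, and let $v(x)$ be the average of $u$ over the geodesic ball $B(\sigma, \rho) \subseteq \Sset^m$, with $v(0)$ the average of $u$ over $\Sset^m$. The usual estimates for this kind of averaging extension yield $v \in C^\infty(\Bset^{m + 1}_{1}, \Rset^\nu)$ with $\tr_{\Sset^m} v = u$ and
\[
 \int_{\Bset^{m + 1}_{1}} \abs{\Deriv v}^{m + 1}
 \le C \smashoperator{\iint_{\Sset^m \times \Sset^m}} \frac{d(u(x), u(y))^{m + 1}}{\abs{x - y}^{2m}} \dif x \dif y \eqqcolon C E ,
\]
together with its localised counterpart. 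Fix $\iota \in \intvo{0}{\infty}$ so small that the nearest-point retraction $\Pi$ onto $\manifold{N}$ is smooth on the $\iota$-neighbourhood of $\manifold{N}$ in $\Rset^\nu$; then there is $\delta \in \intvo{0}{\infty}$, depending only on $m$ and $\manifold{N}$, such that $\dist(v(x), \manifold{N}) < \iota / 2$ whenever $\meanosc(u, B(\sigma, \rho)) \le \delta$, the mean oscillation being measured with the geodesic distance $d$.

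\emph{Good and bad regions.} Call $x$ \emph{good} when $\meanosc(u, B(\sigma', \rho')) \le \delta$ for all $\sigma' \in \Sset^m$ and $\rho' \ge \rho$ with $d_{\Sset^m}(\sigma, \sigma') \le \rho'$, and \emph{bad} otherwise. The bad set is a union $\bigcup_i \manifold{C}_i$ of Whitney-type regions (Carleson boxes) lying over a Besicovitch-type subfamily of geodesic balls $\Sigma_i = B(\sigma_i, r_i) \subseteq \Sset^m$ on each of which the oscillation of $u$ reaches $\delta$ for the first time; since $\abs{\xi - \eta} \lesssim r_i$ on $\Sigma_i \times \Sigma_i$, a Chebyshev estimate at this threshold bounds the number of such regions by
\[
 C \smashoperator{\iint_{\substack{(x, y) \in \Sset^m \times \Sset^m \\ d(u(x), u(y)) \ge \delta}}} \frac{\dif x \dif y}{\abs{x - y}^{2m}} \eqqcolon C E_\delta ,
\]
which is exactly the exponent appearing in \eqref{eq_Ufau0geiGh1Eang5voozehai}; note also that $\delta^{m + 1} E_\delta \le E$. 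On the complement of $\bigcup_i \manifold{C}_i$ one simply sets $U \defeq \Pi \circ v$: this is continuous, $\manifold{N}$-valued, and $\abs{\Deriv U} \le C \abs{\Deriv v}$, so it contributes at most $C E$ to $\int \abs{\Deriv U}^{m + 1}$ and hence to the left-hand side of \eqref{eq_Ufau0geiGh1Eang5voozehai}.

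\emph{Resolution inside a box.} On the outer part of $\partial \manifold{C}_i$ one still has the good map $\Pi \circ v$, which determines a map $g_i \colon \Sset^m \to \manifold{N}$ of controlled energy; the only obstruction to extending it through $\manifold{C}_i$ down to the prescribed boundary value $u \restr{\Sigma_i}$ is the non-triviality of a homotopy class. I would realise this extension as a homotopy, in an annular subregion of $\manifold{C}_i$, from $g_i$ to a normalised map, followed near an interior point $a_i$ by a single $0$-homogeneous bubble $x \mapsto \omega_i\bigl((x - a_i) / \abs{x - a_i}\bigr)$, where $\omega_i$ ranges over a fixed finite family of smooth maps $\Sset^m \to \manifold{N}$ carrying the trapped class. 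Each such bubble lies in $W^{1, (m + 1, \infty)}$ but not in $W^{1, m + 1}$, is continuous away from $a_i$, and satisfies $t^{m + 1} \mathcal{L}^{m + 1}(\set{\abs{\Deriv U} \ge t}) \le C$ on the bubble, uniformly in $t$; summed over $i$ this contributes $\le C E_\delta \le C E / \delta^{m + 1}$. Thus $U \in W^{1, 1}_{\mathrm{loc}}(\bar{\Bset}^{m + 1}_{1}, \manifold{N})$, it is continuous off the finite set $S \defeq \set{a_i}$ of cardinality $\le C E_\delta$, it has trace $u$, and everything in \eqref{eq_Ufau0geiGh1Eang5voozehai} is now accounted for \emph{except} the energy of the connecting homotopies.

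\emph{Main obstacle.} The heart of the matter is a quantitative nullhomotopy (``filling'') estimate: given a map $g \colon \Sset^m \to \manifold{N}$ of finite $W^{m / (m + 1), m + 1}$-energy which is nullhomotopic relative to the chosen bubble, produce a homotopy to the normalised map of least possible $W^{1, m + 1}$-energy on $\Sset^m \times \intvc{0}{1}$. For a general compact $\manifold{N}$ this cost is genuinely \emph{exponential} in the energy of $g$ --- the analytic shadow of exponential Dehn-type functions --- and a naive scale-by-scale homotopy compounds these exponentials, producing the double-exponential bound of \cite{Petrache_VanSchaftingen_2017}. To bring it down to the single exponential $\exp(B E_\delta)$ of \eqref{eq_Ufau0geiGh1Eang5voozehai} I would (i) carry out every homotopy across a scale on which the oscillation of $u$ stays below $\delta$ by a \emph{direct} interpolation measured in the sharp Lorentz scale, using the embedding $W^{1, (m + 1, 1)} \hookrightarrow C^0$ on $(m + 1)$-dimensional domains, so that these contributions are \emph{additive} and total $\le C E$; and (ii) invoke the exponential filling estimate only on a coarse skeleton collecting the $\lesssim E_\delta$ genuinely bad scales and locations, fed with data pre-normalised to the fixed finite family above, so that the exponent it produces is proportional to $E_\delta$ rather than to the full energy $E$. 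Making (i)--(ii) precise --- in particular the bookkeeping that keeps the good energy linear in $E$ while confining every exponential loss to the threshold-exceeding part, and the topological step that identifies and normalises the trapped classes --- is where the real work lies; the remaining points (compatibility of $U$ along the interfaces of the $\manifold{C}_i$, measurability and $W^{1, 1}_{\mathrm{loc}}$-regularity across the points $a_i$, and the passage from the energy bounds to the weak-type inequality) are routine. The construction being local near $\Sset^m$, the same scheme should apply verbatim to the half-space and to the hyperbolic space.
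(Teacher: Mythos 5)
There is a genuine gap, and you have located it yourself: the entire argument funnels into the ``quantitative nullhomotopy (filling) estimate'' and the bookkeeping (i)--(ii), which you explicitly leave as ``where the real work lies.'' Nothing in the proposal substantiates the claim that a nullhomotopic boundary map of energy $E$ admits an $\manifold{N}$-valued filling of $W^{1,m+1}$-energy $\exp(CE)$ for a general compact $\manifold{N}$, nor the claim that one can confine all exponential losses to the threshold-exceeding part while keeping the good part additive in $E$; no such filling lemma is stated, let alone proved, and the topological step of ``identifying and normalising the trapped classes'' (which presupposes $\pi_m(\manifold{N})$ finitely generated and a uniform energy bound on representatives) is also missing. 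Moreover, your decomposition of the bad set via a Besicovitch-type family of Carleson boxes over geodesic balls is precisely the route of \cite{Petrache_VanSchaftingen_2017} that the paper identifies as the source of the double exponential: the bad balls overlap, one must split them into disjoint subfamilies and iterate constructions across families and scales, and the paper's stated purpose is to \emph{avoid} this by working with genuinely disjoint cubes from the start.

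The paper's actual proof needs none of the homotopy-theoretic machinery. After conformally transferring $\Bset^{m+1}_1$ to $\Rset^{m+1}_+$, it takes the convolution extension $V$ and proves averaged estimates (\cref{proposition_longitudinal,proposition_transversal,proposition_combined}) for the truncated mean oscillation on the full boundary skeleton of $\lambda$-adic cubes, integrated over the dilation $\tau\in[1,\lambda]$ and the translation $h\in[0,1]^m$. Choosing $\ln\lambda\simeq E_\delta$ (the gap potential) and pigeonholing in $(\tau,h)$ produces a single cubical skeleton on \emph{all} of whose faces $\dist(V,\manifold{N})\le\delta_{\manifold{N}}/2$ and on which $\sum_Q\sup_{\partial Q}x_{m+1}^{m+1}|\Deriv V|^{m+1}\lesssim E/\ln\lambda$. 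Good cubes (small boundary gradient) receive a linear Gagliardo extension (\cref{lemma_extension}), bad cubes --- finitely many, with count $\lesssim E/(\mu\ln\lambda)$ --- receive a plain $0$-homogeneous extension of the boundary data (\cref{lemma_homogeneous}), which is admissible because the boundary data is already uniformly close to $\manifold{N}$; composing with the nearest-point retraction finishes. The single exponential arises purely from the factor $(\lambda-1)^{m+1}$ in the cube-by-cube estimates with $\lambda=\exp(CE_\delta)$, not from any Dehn-type filling cost. So your outline would, at best, reproduce the older double-exponential bound unless the unproved filling estimate and the scale bookkeeping are supplied, and those are the theorem's actual content.
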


The gap potential of the double integral appearing in the exponential in \eqref{eq_Ufau0geiGh1Eang5voozehai} 
\begin{equation}
\label{eq_uto9aefeaBimaGhaix9EeVoo}
  \smashoperator{\iint_{\substack{
      (x, y) \in \Sset^m \times \Sset^m\\
      d (u (x), u (y))\ge \delta
      }}}
 \frac{1}{\abs{x - y}^{2m}} \dif x \dif y
\end{equation}
first appeared in estimates by Bourgain, Brezis and Mironescu on the topological degree of maps from a sphere to itself
\citelist{\cite{Bourgain_Brezis_Nguyen_2005_CRAS}*{Th.\ 1.1}\cite{Bourgain_Brezis_Mironescu_2005_CPAM}*{Open problem 2}\cite{Nguyen_2007}} (see also \cite{Nguyen_2014}) and in estimates on free homotopy decompositions of mappings \cite{VanSchaftingen_2020}, as well as in estimates on liftings \citelist{\cite{Nguyen_2008_CRAS}*{Th.~2}\cite{VanSchaftingen_SumLift}}; they characterize, in the limit \(\delta \to 0\), first-order Sobolev spaces \citelist{\cite{Nguyen_2006}\cite{Nguyen_2007_CRAS}\cite{Nguyen_2008}} and encompass a property stronger than VMO \cite{Brezis_Nguyen_2011}.

\Cref{theorem_ball} implies, in particular, the weak-type estimates of \cite{Petrache_VanSchaftingen_2017}.
The improvement of \cref{theorem_ball} is two-fold: the dependence of the weak-type Sobolev bound on the extension \(U\) is exponential in the Gagliardo energy of \(u\), which is much more reasonable than the double exponential in \cite{Petrache_VanSchaftingen_2017} and the nonlinear part of the estimate, that is the exponential, relies on a gap potential \eqref{eq_uto9aefeaBimaGhaix9EeVoo} instead of a full fractional Gagliardo energy
\begin{equation*}
  \smashoperator{\iint_{\Sset^m \times \Sset^m}}
 \frac{d (u(x), u (y))^p}{\abs{x - y}^{2m}} \dif x \dif y.
\end{equation*}
The latter controls the former by the immediate estimate 
\begin{equation*}
   \smashoperator{\iint_{\substack{
      (x, y) \in \Sset^m \times \Sset^m\\
      d (u (x), u (y))\ge \delta
      }}}
 \frac{1}{\abs{x - y}^{2m}} \dif x \dif y
 \le \frac{1}{\delta^p} \smashoperator{\iint_{\Sset^m \times \Sset^m}}
 \frac{d (u(x), u (y))^p}{\abs{x - y}^{2m}} \dif x \dif y.
\end{equation*}

We also have a counterpart of \cref{theorem_ball} for the extension on the half-space \(\Rset^{m + 1 }_+ \defeq \Rset^m \times \intvo{0}{\infty}\) of mappings defined on the hyperplane \(\Rset^m \simeq \Rset^m \times \set{0}\).

\begin{theorem}
\label{theorem_halfspace}
Let \(m \in \Nset \setminus \set{0}\) and let \(\manifold{N}\) be a compact Riemannian manifold.
There exist constants \(A, B, \delta \in \intvo{0}{\infty}\) such that 
for every \(u \in W^{m/(m + 1), m + 1} (\Rset^m, \manifold{N})\), there
exists \(U \in W^{1, 1}_{\mathrm{loc}} (\Bar{\Rset}^{m + 1}_+, \manifold{N})\)
such that \(\tr_{\Rset^m} U = u\) and for every \(t \in \intvo{0}{\infty}\),
\begin{multline}
\label{eq_aiD8Oob2ahdah8lae0einei6}
 t^{m + 1} \mathcal{L}^{m + 1} \brk{\set{ x \in \Rset^{m + 1}_+ \st \abs{\Deriv U (x)}\ge t}}\\
 \le A \exp \brk[\Bigg]{\hspace{1em}B
  \smashoperator{
    \iint_{\substack{
      (x, y) \in \Rset^m \times \Rset^m\\
      d (u (x), u (y))\ge \delta
      }}}
 \frac{1}{\abs{x - y}^{2m}} \dif x \dif y
 }
 \smashoperator[r]{\iint_{\Rset^m \times \Rset^m}}
 \frac{d(u (x), u(y))^{m + 1}}{\abs{x - y}^{2m}} \dif x \dif y.
\end{multline}
Moreover, one can take \(U\in C(\Rset^{m + 1}_+ \setminus S, \manifold{N})\), where the singular set \(S \subset \Rset^{m + 1}_+\) is a finite set whose cardinality is controlled by the right-hand side of \eqref{eq_aiD8Oob2ahdah8lae0einei6}.
\end{theorem}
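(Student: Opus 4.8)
The plan is to deduce \cref{theorem_halfspace} from \cref{theorem_ball} by a Möbius change of variables. Fix a point \(N \in \Sset^m\) and a conformal diffeomorphism \(\Phi \colon \bar{\Bset}^{m + 1}_1 \setminus \set{N} \to \bar{\Rset}^{m + 1}_+\) (a Cayley transform) which is smooth up to the boundary away from \(N\), maps \(\Sset^m \setminus \set{N}\) onto \(\Rset^m\), maps the open ball onto the open half-space, and sends \(N\) to infinity. Given \(u \in W^{m/(m + 1), m + 1}(\Rset^m, \manifold{N})\), set \(v \defeq u \circ \Phi \restr{\Sset^m}\) on \(\Sset^m \setminus \set{N}\). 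Because \(m = sp\) for \(s = m/(m + 1)\) and \(p = m + 1\), the kernel \(\abs{x - y}^{-2m} \dif x \dif y\) is invariant under the Möbius map; consequently the Gagliardo energy of \(v\) over \(\Sset^m\) equals that of \(u\) over \(\Rset^m\), and likewise the gap potential \eqref{eq_uto9aefeaBimaGhaix9EeVoo} of \(v\) equals that of \(u\) for every \(\delta \in \intvo{0}{\infty}\), the single point \(N\) contributing nothing to the double integrals. In particular \(v \in W^{m/(m + 1), m + 1}(\Sset^m, \manifold{N})\), so \cref{theorem_ball} furnishes \(V \in W^{1, 1}_{\mathrm{loc}}(\bar{\Bset}^{m + 1}_1, \manifold{N})\) with \(\tr_{\Sset^m} V = v\), continuous off a finite set \(S\) whose cardinality is controlled by the right-hand side of \eqref{eq_Ufau0geiGh1Eang5voozehai}, and satisfying \eqref{eq_Ufau0geiGh1Eang5voozehai}. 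One then sets \(U \defeq V \circ \Phi^{-1}\) on \(\Rset^{m + 1}_+\).

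All the asserted properties of \(U\) except the weak-type estimate follow immediately: \(\Phi^{-1}\) restricts to a smooth diffeomorphism from the open half-space onto the open ball, so \(U \in W^{1, 1}_{\mathrm{loc}}(\bar{\Rset}^{m + 1}_+, \manifold{N})\) and \(U\) is continuous on \(\Rset^{m + 1}_+ \setminus \Phi (S)\), the set \(\Phi (S)\) being finite of the same cardinality as \(S\); and \(\tr_{\Rset^m} U = u\) by the boundary correspondence \(\Phi \restr{\Sset^m}\). As for the estimate, conformality of \(\Phi^{-1}\) gives the pointwise identity \(\abs{\Deriv U (x)} = \lambda (x)\, \abs{\Deriv V (\Phi^{-1} (x))}\), where \(\lambda\) is the conformal factor of \(\Phi^{-1}\); substituting \(z = \Phi^{-1} (x)\) transforms \(t^{m + 1} \mathcal{L}^{m + 1} \brk{\set{x \in \Rset^{m + 1}_+ \st \abs{\Deriv U (x)} \ge t}}\) into \(\int_{\set{z \in \Bset^{m + 1}_1 \st \abs{\Deriv V (z)} \ge t \mu (z)}} (t \mu (z))^{m + 1} \dif z\), where \(\mu = 1/(\lambda \circ \Phi)\) is the conformal factor of \(\Phi\). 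On the part \(\set{z \in \bar{\Bset}^{m + 1}_1 \st \dist (z, N) \ge \tfrac12}\) away from \(N\) the factor \(\mu\) is bounded above and below by positive constants, so there the integral is dominated by the right-hand side of \eqref{eq_Ufau0geiGh1Eang5voozehai} applied to \(V\).

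The crux — and the step I expect to be the main obstacle — is the part of the integral near \(N\), where \(\mu (z)\) is comparable to \(\dist (z, N)^{-2}\) and therefore degenerates, so that \(\abs{\Deriv U}\) near infinity is \(\abs{\Deriv V}\) near \(N\) weighted by a blowing‑up density. A bound using only the Marcinkiewicz quasinorm of \(\Deriv V\) does not obviously close, because a weak‑\(L^{m + 1}\) function can reconcentrate near \(N\) along a sequence of shrinking scales; so the level‑set change of variables has to be carried out scale by scale near \(N\), keeping track of how much of \(\Deriv V\) actually lives at each dyadic distance from \(N\).

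I see two ways to handle this. The cleaner one is not to use \cref{theorem_ball} as a black box at all, but to repeat its proof with \(\Bset^{m + 1}_1\) and \(\Sset^m\) replaced throughout by \(\Rset^{m + 1}_+\) and \(\Rset^m\): the half-space, being invariant under translations and dilations, is if anything a more natural setting, and the Whitney-type decomposition of the domain, the local Poincaré and trace inequalities on its pieces, and the topological surgery producing the finite singular set all have verbatim half-space analogues, the only genuinely new feature being the region near infinity, which is controlled by the fact that the part of the finite Gagliardo energy and gap potential of \(u\) carried by pairs \((x, y)\) with \(\abs{x} \ge R\) or \(\abs{y} \ge R\) tends to \(0\) as \(R \to \infty\). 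The alternative keeps the Möbius reduction but exploits that the \(V\) produced by \cref{theorem_ball} is built by a boundary-local construction, so that \(\abs{\Deriv V}\) on the part of \(\Bset^{m + 1}_1\) within distance \(r\) of \(N\) is controlled by the Gagliardo energy and gap potential of \(v\) over a comparable boundary neighbourhood of \(N\), equivalently of \(u\) outside a ball of radius comparable to \(1/r\); fed scale by scale into the change-of-variables integral above, together with the decay of those tails, this renders the near-infinity contribution summable and yields \eqref{eq_aiD8Oob2ahdah8lae0einei6}. The variant for the hyperbolic space mentioned in the abstract follows in the same manner, \(\Rset^{m + 1}_+\) being a conformal model of \(\Hset^{m + 1}\) with conformal boundary \(\Rset^m \cup \set{\infty} \simeq \Sset^m\).
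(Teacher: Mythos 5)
Your main route --- deducing \cref{theorem_halfspace} from \cref{theorem_ball} by a Cayley transform --- does not close, and you correctly diagnose why: the weak-type quantity \(t^{m+1}\mathcal{L}^{m+1}(\{\abs{\Deriv U}\ge t\})\) is not conformally invariant, and near the point \(N\) sent to infinity the conformal factor degenerates so badly that a weak-\(L^{m+1}\) bound on \(\Deriv V\) alone cannot control the transported level-set integral (the paper makes the same point when it remarks that the three theorems are not equivalent under the standard conformal identifications). What remains after this diagnosis is not a proof but two unexecuted fallbacks. The first (``repeat the proof on the half-space'') is indeed the right strategy --- in fact the paper's logic runs in the opposite direction: the half-space is the primary setting and \cref{theorem_ball} is the one obtained by conformal transport --- but you supply none of its substance, and you cannot appeal to ``the proof of \cref{theorem_ball}'' as a template, since producing that construction is precisely what is at stake. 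The second fallback invokes structural properties of the extension furnished by \cref{theorem_ball} (boundary-locality of \(\Deriv V\) near \(N\)) that are not part of its statement, and the asserted summability of the near-infinity contributions over dyadic scales is not justified.

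Concretely, the missing content is the entire quantitative scheme. The paper extends \(u\) by convolution to \(V\) on \(\Rset^{m+1}_+\), subdivides \(\Rset^{m+1}_+\) into \(\lambda\)-adic cubes, and proves estimates, averaged over the scale parameter \(\tau\in\intvc{1}{\lambda}\) and over translations \(h\), for the supremum of a \emph{truncated} mean oscillation on the cube skeletons in terms of the truncated Gagliardo energy (\cref{proposition_longitudinal,proposition_transversal,proposition_combined}); this yields both a count of cubes on whose boundaries \(V\) strays from \(\manifold{N}\), controlled by the gap potential (\cref{proposition_goodcubes_gapintegral}), and a bound on \(\sup_{\partial Q} x_{m+1}\abs{\Deriv V}\) summed over all cubes (\cref{proposition_goodcubes_sobolev}). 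The decisive step is to choose \(\lambda\) equal to an exponential of the gap potential, so that a pigeonhole argument in \(\tau\) and \(h\) produces a skeleton on which \(V\) is everywhere within \(\delta_{\manifold{N}}/2\) of \(\manifold{N}\); on the cubes where the rescaled gradient on the boundary is small one uses a linear trace extension (\cref{lemma_extension}) and retracts onto \(\manifold{N}\), while on the finitely many bad cubes one uses the homogeneous extension (\cref{lemma_homogeneous}), which is where the weak-type (rather than strong) estimate and the finite singular set come from. None of these ingredients --- the truncated oscillation, the exponential choice of \(\lambda\), the averaging over scales and translations, the good/bad cube dichotomy --- appears in your proposal, so the exponential dependence in \eqref{eq_aiD8Oob2ahdah8lae0einei6}, which is the point of the theorem, is not obtained.
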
 

Here \(U \in W^{1, 1}_{\mathrm{loc}} (\Bar{\Rset}^{m + 1}_+, \manifold{N})\) means that \(U\) is weakly differentiable and that for every compact set \(K \subset \Bar{\Rset}^{m + 1}_+\),
\(
 \int_{K} \abs{\Deriv U} < \infty
\).
It follows from \eqref{eq_aiD8Oob2ahdah8lae0einei6} that for every relatively finite-measure open set \(G \subset \Bar{\Rset}^{m + 1}_+\) and every \(q \in \intvr{1}{m + 1}\), we have
\(U \in W^{1, q} \brk{G, \manifold{N}}\).

Finally, we consider the extension to the hyperbolic space \(\Hset^{m + 1}\) of maps defined on its boundary sphere \(\Sset^m\). 
Using the Poincaré ball model, the hyperbolic space \(\Hset^{m + 1}\) is the ball \(\Bset^{m+1}_{1}\), endowed with the metric \(g_{\mathrm{hyp}}\) defined for \(x \in \Bset^{m + 1}_{1}\) in terms of the Euclidean metric \(g_{\mathrm{euc}}\) by 
\[
 g_{\mathrm{hyp}} (x) = \frac{4 g_{\mathrm{euc}}(x)}{\brk{1 - \abs{x}^2}^2},
\]
whose boundary \(\Sset^{m}\) is then considered to be the boundary sphere of \(\Hset^{m + 1}\).
In this setting, we have a hyperbolic counterpart of \cref{theorem_ball} and \cref{theorem_halfspace}.

\begin{theorem}
\label{theorem_hyperbolic}
Let \(m \in \Nset \setminus \set{0}\) and let \(\manifold{N}\) be a compact Riemannian manifold.
There exist constants \(A, B, \delta \in \intvo{0}{\infty}\) such that 
for every \(u \in W^{m/(m + 1), m + 1} (\Sset^m, \manifold{N})\), there
exists \(U \in W^{1, 1} (\Bset^{m + 1}_{1}, \manifold{N})\)
such that \(\tr_{\Sset^m} U = u\) and for every \(t \in \intvo{0}{\infty}\),
\begin{multline}
\label{eq_oogo0che0aevaeNgahshethu}
 \mathcal{H}^{m + 1} \brk{\set{ x \in \Hset^{m + 1} \st \abs{\Deriv U (x)}\ge t}}\\
 \le \frac{A}{t^{m + 1}} \exp \brk[\Bigg]{\hspace{1em}B
  \smashoperator{
    \iint_{\substack{
      (x, y) \in \Sset^m \times \Sset^m\\
      d (u (x), u (y))\ge \delta
      }}}
 \frac{1}{\abs{x - y}^{2m}} \dif x \dif y
 }
 \smashoperator[r]{\iint_{\Sset^m \times \Sset^m}}
 \frac{d(u (x), u(y))^{m + 1}}{\abs{x - y}^{2m}} \dif x \dif y.
\end{multline}
Moreover, one can take \(U\in C(\Hset^{m + 1}\setminus S, \manifold{N})\), where the singular set \(S \subset \Hset^{m + 1}\) is a finite set whose cardinality is controlled by the right-hand side of \eqref{eq_oogo0che0aevaeNgahshethu}.
\end{theorem}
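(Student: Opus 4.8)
The approach is to deduce \cref{theorem_hyperbolic} from \cref{theorem_ball} (and likewise \cref{theorem_halfspace} from a half-space variant) by a conformal change of variables. The first observation is that the nonlinear quantities entering the right-hand sides of \eqref{eq_Ufau0geiGh1Eang5voozehai} and \eqref{eq_oogo0che0aevaeNgahshethu} — the critical Gagliardo seminorm \(\iint d(u(x),u(y))^{m+1}/\abs{x-y}^{2m}\) and the gap potential \eqref{eq_uto9aefeaBimaGhaix9EeVoo} — are invariant under Möbius transformations of the boundary. Indeed, a Möbius map \(\phi\) of \(\Sset^m\) with conformal factor \(\mu\) satisfies \(\abs{\phi(x)-\phi(y)} = \mu(x)^{1/2}\mu(y)^{1/2}\abs{x-y}\) and \(\abs{\det\Deriv\phi} = \mu^m\), and since \(\frac{m}{m+1}(m+1)+m = 2m\) the exponent \(2m\) is conformally critical, so replacing \(u\) by \(u\compose\phi^{-1}\) leaves both quantities unchanged. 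As the Poincaré metric is invariant under the action of Möbius maps, this already matches the half-space and ball versions to one another; it remains to relate the hyperbolic and Euclidean estimates on the ball.

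Write the Poincaré metric as \(g = \lambda^2 g_{\mathrm{euc}}\) on \(\Bset^{m+1}_1\), with \(\lambda(x) = 2/(1-\abs{x}^2) \ge 2\). For any \(U\) one has \(\abs{\Deriv U}_g = \lambda^{-1}\abs{\Deriv U}_{\mathrm{euc}}\) and \(\dif\mathcal{H}^{m+1} = \lambda^{m+1}\dif\mathcal{L}^{m+1}\), so the energy density \(\abs{\Deriv U}_g^{m+1}\dif\mathcal{H}^{m+1}\) is literally the Euclidean one — the conformal invariance of the \((m+1)\)-energy in dimension \(m+1\), which is why \cref{theorem_ball} is the right input — whereas
\[
 \mathcal{H}^{m+1}\brk{\set{x\in\Hset^{m+1} \st \abs{\Deriv U(x)}_g \ge t}}
 = \smashoperator{\int_{\set{x\in\Bset^{m+1}_1 \st \abs{\Deriv U(x)}_{\mathrm{euc}}\ge t\lambda(x)}}} \lambda^{m+1}.
\]
Taking \(U\) to be the mapping produced by \cref{theorem_ball} on the same ball with the same boundary datum \(u\), the trace identity, the finiteness of the singular set \(S\subset\Bset^{m+1}_1\) and the bound on its cardinality, and the continuity of \(U\) on \(\Hset^{m+1}\setminus S = \Bset^{m+1}_1\setminus S\) are inherited at once, so the whole task reduces to the weighted weak-type estimate for the last integral.

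For this one cannot use \cref{theorem_ball} as a black box: the inequality \eqref{eq_Ufau0geiGh1Eang5voozehai} alone does not suffice, because the weight \(\lambda^{m+1}\) blows up at \(\partial\Bset^{m+1}_1\), so that gradient mass concentrated in a thin collar near the boundary — negligible for \(\mathcal{L}^{m+1}\) — is heavy for \(\mathcal{H}^{m+1}\). What is needed is the scale-localized information contained in the proof of \cref{theorem_ball}: that proof can be set up so as to yield, together with \(S = \set{a_1,\dots,a_N}\), radii \(R_i\) and weights \(c_i\) with \(R_i \lesssim \dist(a_i,\Sset^m)\) and a pointwise estimate \(\abs{\Deriv U}_{\mathrm{euc}} \le h + \sum_{i=1}^N c_i\,\chi_{B(a_i,R_i)}/\abs{\,\cdot-a_i}\), where \(h\in L^{m+1}(\Bset^{m+1}_1)\) and \(\int_{\Bset^{m+1}_1} h^{m+1} + \sum_i c_i^{m+1} + N\) is controlled by the right-hand side of \eqref{eq_Ufau0geiGh1Eang5voozehai}. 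Plugging this decomposition into the integral above, the part where \(h\ge t\lambda/2\) contributes at most a multiple of \(t^{-(m+1)}\int h^{m+1}\) by Chebyshev's inequality against the weight \(\lambda^{m+1}\); on each ball \(B(a_i,R_i)\) the weight \(\lambda\) is comparable to the constant \(\lambda(a_i)\), and because \(R_i\lambda(a_i)\lesssim 1\) the boundary degeneracy is harmless, the contribution of the \(i\)-th singularity being \(\lesssim (c_i/t)^{m+1}\) (up to a benign combinatorial factor from splitting \(t\lambda\) among the \(N\) terms, which a dyadic bookkeeping over level sets of \(\abs{\Deriv U}_{\mathrm{euc}}\) would remove). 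Summing and multiplying by \(t^{m+1}\) gives a bound of the shape of \eqref{eq_oogo0che0aevaeNgahshethu}, with the combinatorial factor absorbed into the exponential at the price of enlarging \(A\) and \(B\).

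The main difficulty, accordingly, is the point just isolated: one must know that the singularities of the constructed \(U\) are truncated at Euclidean scale comparable to their distance from \(\Sset^m\) — that is, at bounded \emph{hyperbolic} scale — which is precisely the structural feature that a Whitney-type construction adapted to the boundary provides. This is also why the cleanest reading of ``the same proof carries on'' is to run the construction of \cref{theorem_ball} intrinsically in \(\Hset^{m+1}\): replace the Euclidean Whitney decomposition of \(\Bset^{m+1}_1\) by a cover by hyperbolic balls of a fixed radius, whose shadows on \(\Sset^m\) form a dyadic-type family, and perform the extension and all the estimates with respect to \(g\). Then \eqref{eq_oogo0che0aevaeNgahshethu} is the primary statement, and by the conformal comparison of the second paragraph it implies, in turn, the Euclidean statements of \cref{theorem_ball} and \cref{theorem_halfspace}.
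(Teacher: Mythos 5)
Your proposal is essentially the paper's argument: the paper also does not invoke \cref{theorem_ball} as a black box, but reruns the half-space construction (transferred to the boundary sphere by the conformal map \(\Psi\), under which the Gagliardo energy and the gap potential are invariant, exactly as you observe) and replaces only the final measure-theoretic step. The key structural point you isolate --- that the singular pieces live at bounded \emph{hyperbolic} scale, so the conformal weight is comparable to a constant on each of them --- is precisely what the paper uses: each cube \(Q\in\mathcal{Q}^+_{\lambda,\tau,k,h}\) has Euclidean edge \(\tau\lambda^{-k}\) while \(x_{m+1}\in[\tfrac{\tau\lambda^{-k}}{\lambda-1},\tfrac{\tau\lambda^{-(k-1)}}{\lambda-1}]\) on it, so the weighted weak-type bound on a bad cube costs only an extra factor \(\lambda^{m+1}\) relative to \eqref{eq_cheijo5gee6ahphieQui1sho}, which is absorbed into the exponential. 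Your closing suggestion to run the construction ``intrinsically'' in \(\Hset^{m+1}\) with a decomposition at fixed hyperbolic scale is, in substance, what the \(\lambda\)-adic cubes already are.

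One caveat on your packaging. The pointwise decomposition \(\abs{\Deriv U}_{\mathrm{euc}}\le h+\sum_i c_i\,\chi_{B(a_i,R_i)}/\abs{\cdot-a_i}\) with \(h\in L^{m+1}\) is stronger than what the construction delivers: on a bad cube the homogeneous extension of a \(W^{1,m+1}\) boundary datum \(w\) has \(\abs{\Deriv W(x)}=\tfrac{\rho}{\abs{x}}\abs{\Deriv w(\tfrac{\rho}{\abs{x}}x)}\), which is only in weak \(L^{m+1}\) (\cref{lemma_homogeneous}) and is not pointwise dominated by \(c/\abs{x-a}\) unless \(\Deriv w\) is bounded. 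This does not damage your argument --- since the cubes are pairwise disjoint, the superlevel set of \(\abs{\Deriv U}\) is handled cube by cube with no splitting of \(t\) and no combinatorial factor, and on each bad cube the per-cube weak-type bound of \cref{lemma_homogeneous} together with the constancy of the weight is exactly what is needed --- but you should state the bad-cube contribution as a weak-type bound rather than a pointwise one.
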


Note that we assert that \(U \in W^{1, 1} (\Bset^{m + 1}_{1}, \manifold{N})\), with the ball \(\Bset^{m + 1}_{1}\) endowed with the Euclidean metric instead of the same ball \(\Hset^{m + 1}\) endowed with the hyperbolic metric. 
Indeed, \(U \in  W^{1, 1} (\Bset^{m + 1}_{1}, \manifold{N})\) would translate as 
\begin{equation*}
   \int_{\Hset^{m + 1}} \abs{\Deriv U} =  \int_{\Bset^{m + 1}_1} \frac{2^m \abs{\Deriv U \brk{x}}}{\brk{1 - \abs{x}^2}^m} \dif x < \infty,
\end{equation*}
which is neither a consequence of \(U \in  W^{1, 1} (\Bset^{m + 1}_{1}, \manifold{N})\) nor \(U \in  W^{1, 1} (\Hset^{m + 1}, \manifold{N})\) and has no reason to be expected.

Although standard conformal transformations between \(\Bset^{m+ 1}_{1}\), \(\Hset^{m + 1}\) and \(\Rset^{m + 1}_{+}\) preserve the fractional Gagliardo energy of the boundary values in the right-hand sides of \eqref{eq_Ufau0geiGh1Eang5voozehai}, \eqref{eq_aiD8Oob2ahdah8lae0einei6} and \eqref{eq_oogo0che0aevaeNgahshethu}, the corresponding gap potentials and the strong-type quantities
\begin{align*}
 &\int_{\Bset^{m + 1}_{1}}
 \abs{\Deriv U}^{m + 1},&
 &\int_{\Rset^{m + 1}_{+}}
 \abs{\Deriv U}^{m + 1}&
 &\text{ and }&
 &\int_{\Hset^{m + 1}}
 \abs{\Deriv U}^{m + 1}
\end{align*}
corresponding to the weak-type quantities in their left-hand side,
the left-hand sides as such are not conformally invariant so that \cref{theorem_ball,theorem_halfspace,theorem_hyperbolic} are not equivalent to each other.
In practice, this explains why we have the same construction of \(U\) in all three cases, but three different particular estimates on \(U\).

At the core of the proof of  \cref{theorem_ball,theorem_halfspace,theorem_hyperbolic} is a refined understanding of functions in the fractional critical Sobolev space \(W^{s, p} (\Rset^m, \Rset^\nu)\).
We start from the elegant and versatile approach through the vanishing mean oscillation property of critical Sobolev maps \cite{Brezis_Nirenberg_1995}: 
if \(sp = m\), it follows from Lebesgue's dominated convergence theorem and from the definition in \eqref{def of fractsobspace}
that 
\begin{equation}
\label{eq_uMoo2ohthae6ahs9Aukahw5e}
\lim_{r \to 0} \sup_{x \in \Rset^m} 
 \fint_{\Bset^{m}_r (x)} \fint_{\Bset^{m}_r (x)} \abs{u (y) - u (z)}^p \dif y \dif z 
  = 0.
\end{equation}
If \(u \in \manifold{N}\) almost everywhere, then we have 
\begin{equation}
\label{eq_bu1aiv1weet5Feohij5reeth}
\begin{split}
 \dist\brk[\Big]{\fint_{\Bset^{m}_r (x)} u, \manifold{N}}^{p}
 %&\le \fint_{\Bset^{m}_r (x)} \fint_{\Bset^{m}_r (x)} \abs{u (y) - u (z)} \dif y \dif z\\
 \le \fint_{\Bset^{m}_r (x)} \fint_{\Bset^{m}_r (x)} \abs{u (y) - u (z)}^p \dif y \dif z.
\end{split}
\end{equation}
Thus, by \eqref{eq_uMoo2ohthae6ahs9Aukahw5e} and \eqref{eq_bu1aiv1weet5Feohij5reeth},
\begin{equation}
\label{eq_Xoomeip5AhcheiLe2iSuoc7p}
   \lim_{r \to 0} \sup_{x \in \Rset^m} \dist \brk[\Big]{\fint_{\Bset^{m}_r (x)} u, \manifold{N}}^{p} = 0.
\end{equation}
Because of the lack of control on the rate of convergence of the limit in \eqref{eq_Xoomeip5AhcheiLe2iSuoc7p}, we need to refine it to get the quantitative results we are looking for.

To give a glimpse of our core quantitative estimate in a simplified setting,
let us  replace \eqref{eq_Xoomeip5AhcheiLe2iSuoc7p} by the integral inequality
\begin{equation}
\label{eq_jaiMeiTeiv8eicoopaiw2ziz}
\begin{split}
 \int_0^\infty  \sup_{x \in \Rset^m}& \fint_{\Bset^{m}_r (x)} \fint_{\Bset^{m}_r (x)} |u (y) - u (z)|^p \dif y \dif z \frac{\dif r}{r} \\
 &\le \frac{1}{\mathcal{L}^m (\Bset^{m}_1)^2}
 \int_0^\infty  
  \smashoperator[r]{\iint_{\substack{(y, z) \in \Rset^m \times \Rset^m\\ \abs{y - z} \le 2r}}} \abs{u (y) - u (z)}^p \dif y \dif z \frac{\dif r}{r^{2m+1}}\\
  &= \frac{4^{m}}{2m \mathcal{L}^m (\Bset^{m}_1)^2} \smashoperator[r]{\iint_{\Rset^m \times \Rset^m}} \frac{\abs{u (y) - u (z)}^p}{\abs{y - z}^{2m}} \dif y \dif z.
 \end{split}
\end{equation}
It follows from \eqref{eq_jaiMeiTeiv8eicoopaiw2ziz} and the Chebyshev inequality that for every \(\lambda \in \intvo{1}{\infty}\) there exists \(\tau \in \intvo{1}{\lambda}\) such that 
\begin{equation}
\label{eq_eowohY4vebuucooxooju1ohm}
\begin{split}
 \sum_{k \in \Zset} \sup_{x \in \Rset^m}&
 \dist \brk[\bigg]{\fint_{\Bset^{m}_{\tau \lambda^{-k}} (x)} u, \manifold{N}}^p \\
 &\le \sum_{k \in \Zset} \sup_{x \in \Rset^m}
 \fint_{\Bset^{m}_{\tau \lambda^{-k}} (x)} \fint_{\Bset^{m}_{\tau \lambda^{-k}} (x)} \abs{u (y) - u (z)}^p \dif y \dif z \\
 &\le \frac{4^m}{2m  \mathcal{L}^m (\Bset^{m}_1)^2 \ln \lambda} \smashoperator[r]{\iint_{\Rset^m \times \Rset^m}} \frac{\abs{u (y) - u (z)}^p}{\abs{y - z}^{2m}} \dif y \dif z.
\end{split}
\end{equation}
If we take now an extension \(u\) by convolution on the half-space \(\Rset^{m + 1}_+\), 
\eqref{eq_eowohY4vebuucooxooju1ohm} implies that when its right-hand side is small enough, that is, when 
\begin{equation}
\label{eq_hoh3aehaic7yeiSoyie7mein}
\ln \lambda \simeq \smashoperator{\iint_{\Rset^m \times \Rset^m}} \frac{\abs{u (y) - u (z)}^p}{\abs{y - z}^{2m}} \dif y \dif z , 
\end{equation}
there is a family of hyperplanes whose distance to the boundary is in geometric progression of ratio \(\lambda\) on which the extension by convolution is close to the manifold.
In view of \eqref{eq_hoh3aehaic7yeiSoyie7mein}, the ratio \(\lambda\) is controlled by an exponential of the Gagliardo fractional energy.

In order to actually prove the results, we improve the estimate \eqref{eq_jaiMeiTeiv8eicoopaiw2ziz} in two directions.
First, instead of considering parallel hyperplanes, we consider a decomposition of cubes on the boundary of which we control the mean oscillation.
Second, we replace the mean oscillation by a truncated mean oscillation which can be controlled by the gap potential \eqref{eq_uto9aefeaBimaGhaix9EeVoo}.

In comparison, the proof in \cite{Petrache_VanSchaftingen_2017} of \eqref{eq_eeyahGie3shee1ahth5luth9} relies on the estimate
(see \cite{Petrache_VanSchaftingen_2017}*{Prop.\ 4.2})
\begin{multline*}
\int_{0}^R 
 \sup_{\substack{x \in \Bset^{m + 1}_{1}\\ \abs{x}= r}} \dist\brk[\bigg]{\frac{(1 - r^2)^m}{\mathcal{H}^{m}(\Sset^m)} \int_{\Sset^m} \frac{u (y)}{\abs{x - y}^{2 m}} \dif y, \manifold{N}} \frac{\dif r}{1 - r^2} \\
 \le \int_{0}^R \frac{(1 - \abs{r}^2)^{2 m - 1}}{\mathcal{H}^{m}(\Sset^m)^2}
 \sup_{\substack{x \in \Bset^{m + 1}_{1}\\ \abs{x}= r}} \smashoperator[r]{\iint_{\Sset^m \times \Sset^m}} \frac{\abs{u (y) - u (z)}}{\abs{x - y}^{2 m}\abs{x - z}^{2 m}} \dif y \dif z \dif r \\
 \le C \brk[\Big]{\ln \frac{2}{1-R}}^{1 - \frac{1}{p}} \brk[\bigg]{\smashoperator[r]{\iint_{\Sset^m \times \Sset^m}} \frac{\abs{u (y)- u (z)}^p}{\abs{y - z}^{2 m}} \dif y \dif z}^\frac{1}{p},
\end{multline*}
with \(R \in \intvo{0}{1}\).
This allows to find bad balls in the hyperbolic space in the Poincaré ball models outside of which
the hyperharmonic extension to \(\Bset^{m+1}_{1}\) is close to the target manifold and whose radius is controlled by the exponential of the Gagliardo fractional energy. One cannot perform directly a homogeneous extension on the bad balls because they could intersect; through a Besicovitch-type covering argument, the construction is performed on collections of disjoint balls, with a number of collections bounded exponentially; combined with the exponential of the radius appearing in the equivalence between critical Marcinkiewicz--Sobolev quasinorms, this explains the double exponential appearing in the final estimate. In the present work we avoid this pitfall by working directly with a decomposition into disjoint cubes.

\section{Mean oscillation on \texorpdfstring{\(\lambda\)}{λ}--adic skeletons}

\subsection{Truncated mean oscillation}
Our analysis will rely on truncated mean oscillations, which have already been used in estimates on homotopy decompositions \cite{VanSchaftingen_2020} and on liftings over noncompact Riemannian coverings \cite{VanSchaftingen_SumLift}.

\begin{definition}
Given \(u : \Rset^{m} \to \manifold{N}\), we define the function \(\meanosc_{\delta, p} u : \Rset^{m + 1}_+ \to \intvc{0}{\infty}\) for every \(x = (x', x_{m +1}) \in \Rset^{m + 1}_+\) by
\begin{equation}
\label{eq_oJeeneithove3ahceid9Odah}
  \meanosc_{\delta, p} u (x)
  \defeq \fint_{\Bset^{m}_{x_{m +1}} (x')} \fint_{\Bset^{m}_{x_{m +1}} (x')} \brk{d (u (y), u (z)) - \delta}_+^p \dif y \dif z.
\end{equation}
\end{definition}

If \(1 \le p_1 \le p_2\), one has by Jensen's inequality,
\begin{equation}
\label{eq_fi2wiYou2sauM6roov6maode}
\brk{\meanosc_{\delta, p_1} u (x)}^{\frac{1}{p_1}}
\le \brk{\meanosc_{\delta, p_2} u (x)}^{\frac{1}{p_2}},
\end{equation}
whereas if \(\delta_0, \delta_1 \in \intvr{0}{\infty}\), one has by the triangle inequality and by convexity, 
\begin{equation}
\label{eq_eer1ong6xaiyaeWei3uo8iem}
  \meanosc_{\delta_1, p} u (x)
  \le 2^{p - 1}\brk{\meanosc_{\delta_0, p} u (x) + \brk{\delta_0 - \delta_1}_+^p}.
\end{equation}

The truncated Gagliardo fractional energy can be written in terms of mean oscillations as 
\begin{equation*}
 \smashoperator{\iint_{\Rset^m \times \Rset^m}}
 \frac{\brk*{d (u (y), u (z)) - \delta}_+^p}{\abs{y - z}^{m + sp}} \dif y \dif z
 \simeq \int_{\Rset^{m + 1}_+} \meanosc_{\delta, p} u (x) \frac{\dif x}{x_{m + 1}^{1 + sp}}.
\end{equation*}

\begin{proposition}\label{prop estimates for distVN and on DV}
Given \(\varphi \in C^\infty (\Rset^m, \Rset)\) such that \(\int_{\Rset^m} \varphi = 1\) and \(\supp \varphi \subseteq \Bset^{m}_1\),
there exists a constant \(C \in \intvo{0}{\infty}\), depending only on \(m\) and \(\varphi\), such that for every \(u \in L^1_{\mathrm{loc}}
\brk{\Rset^{m}, \Rset^\nu}\) and every \(Y \subseteq \Rset^\nu\) satisfying 
\(u \in Y\) almost everywhere in \(\Rset^m\),
if \(V : \Rset^{m + 1}_+ \to \Rset^\nu\) is defined for every \(\brk{x', x_{m + 1}}\) as 
\begin{equation}
\label{eq_shee0Ceingooghe2weefei4A}
 V (x) \defeq \frac{1}{x_{m + 1}^m} \int_{\Rset^m} u (z)\, \varphi \brk*{\tfrac{x' - z}{x_{m + 1}}} \dif z
 = \int_{\Rset^m} u(x' - x_{m + 1} z)\, \varphi(z) \dif z,
\end{equation}
then for every \(x = (x', x_{m + 1}) \in \Rset^{m + 1}_+\), for every $\delta \in [0,\infty)$ and every \(p \in \intvr{1}{\infty}\), 
\begin{equation}
\label{eq_phaejateiGh6ceir1kohngis}
\dist (V (x), Y)^p
\le C^p (\meanosc_{\delta, p} u (x)+ \delta^p)
\end{equation}
and 
\begin{equation}
\label{eq_taerinie4aWee3Phahzefeib}
  \abs{\Deriv V (x)}^p
  \le \frac{C^p}{x_{m+1}^p} \brk{\meanosc_{\delta, p} u (x)+ \delta^p}.
\end{equation}
\end{proposition}
\begin{proof}
First, we have by \eqref{eq_shee0Ceingooghe2weefei4A} and \eqref{eq_oJeeneithove3ahceid9Odah},
\begin{equation}
\label{estimate_mean_osc_1i49i9943r94i9}
\begin{split}
  \dist \brk{V \brk{x}, Y}
  &\le \fint_{\Bset^m_{x_{m + 1}}(x')} \abs{V \brk{x} - u \brk{y}} \dif y \\
  &\le \norm{\varphi}_{L^\infty \brk{\Rset^m}} \mathcal{L}^m \brk{\Bset^m_1}
  \fint_{\Bset^m_{x_{m + 1}}(x')} \fint_{\Bset^m_{x_{m + 1}}(x')} \abs{u \brk{z} - u \brk{y}} \dif z \dif y\\
  &\le \norm{\varphi}_{L^\infty \brk{\Rset^m}} \mathcal{L}^m \brk{\Bset^{m}_{1}}\meanosc_{0, 1} u (x),
\end{split}
\end{equation}
and the first conclusion \eqref{eq_phaejateiGh6ceir1kohngis} 
follows from \eqref{estimate_mean_osc_1i49i9943r94i9}, \eqref{eq_fi2wiYou2sauM6roov6maode} and \eqref{eq_eer1ong6xaiyaeWei3uo8iem}.

Next, defining \(\varphi_1 \colon \Rset^m \to \mathrm{Lin} \brk{\Rset^{m + 1}, \Rset}\)
for \(z \in \Rset^{m}\) and \(h = \brk{h', h_{m + 1}}\) by
\begin{equation*}
 \varphi_1 \brk{z}[\brk{h', h_{m + 1}}] \defeq \Deriv \varphi \brk{z}[h'] - h_{m + 1} \brk{m \varphi\brk{z} + \Deriv \varphi \brk{z} [z]},
\end{equation*}
we have for every \(x = \brk{x', x_{m + 1}} \in \Rset^{m + 1}_+\),
\begin{equation*}
\begin{split}
 \Deriv V \brk{x}
 &= \frac{1}{x_{m + 1}^{m + 1}} \int_{\Rset^m} u (z) \, \varphi_1 \brk*{\tfrac{x' - z}{x_{m + 1}}} \dif z \\
 &= \frac{1}{x_{m + 1}^{m + 1}} \fint_{\Bset^m_{x_{m + 1}}(x')}\int_{\Rset^m} \brk{u (z) - u(y)} \, \varphi_1 \brk*{\tfrac{x' - z}{x_{m + 1}}} \dif z \dif y,
\end{split}
\end{equation*}
from which we get 
\begin{equation*}
\abs{\Deriv V \brk{x}} \le \norm{\varphi_1}_{L^\infty \brk{\Rset^m}} \mathcal{L}^m \brk{\Bset^m_1}
  \fint_{\Bset^m_{x_{m + 1}}(x')} \fint_{\Bset^m_{x_{m + 1}}(x')} \abs{u \brk{z} - u \brk{y}} \dif z \dif y,
\end{equation*}
and we conclude as previously.
\end{proof}

\subsection{\texorpdfstring{\(\lambda\)}{λ}--adic cubes and skeletons}
The domain of the extension by convolution \(V\) given by \eqref{eq_shee0Ceingooghe2weefei4A} is the half-space \(\Rset^{m + 1}_+\); we will subdivide the latter in \(\lambda\)--adic cubes on which we will perform appropriate constructions. 

Given \(\lambda \in \intvo{1}{\infty}\) and \(\tau \in [1, \lambda]\), we consider for every \(k \in \Zset\) the set of cubes of \(\Rset^{m}\)
\begin{equation*}
 \mathcal{Q}_{\lambda, \tau, k}
 \defeq \set{  \tau \lambda^{-k} (\intvc{0}{1}^{m} + j) \st j \in \Zset^{m} }
\end{equation*}
and the corresponding set of cubes of \(\Rset^{m + 1}_+\)
\begin{equation*}
\begin{split}
 \mathcal{Q}^+_{\lambda, \tau, k}
 \defeq& \set{Q \times \intvc{\tfrac{\tau \lambda^{-k}}{\lambda - 1}}{\tfrac{\tau \lambda^{-(k-1)}}{\lambda - 1}}\st Q \in \mathcal{Q}_{\lambda, \tau, k}}\\
 =& \set{  \tau \lambda^{-k} (\intvc{0}{1}^{m + 1} +  (j, (\lambda - 1)^{-1}))
 \st j \in \Zset^{m} }.
 \end{split}
\end{equation*}
In particular, we have the decompositions
\begin{align*}
\Rset^{m} &= \bigcup_{Q \in \mathcal{Q}_{\lambda, \tau, k} } Q&
&\text{and}&
 \Rset^{m + 1}_+ &= \bigcup_{k \in \mathbb{Z}} \bigcup_{Q \in \mathcal{Q}^+_{\lambda, \tau, k} } Q.
\end{align*}

Moreover, we consider the part of the boundaries of the cubes in \(\mathcal{Q}_{\lambda, \tau, k}^+\) that are parallel to the hyperplane \(\Rset^m \times \set{0}\)
\begin{equation}
\label{eq_eWiel4oa8looqu8yohph3thi}
\begin{split}
 \mathcal{Q}^\parallel_{\lambda, \tau, k}
 \defeq& \set{Q \times \set{\tfrac{\tau \lambda^{-k}}{\lambda - 1}}\st Q \in \mathcal{Q}_{\lambda, \tau, k}}\\
 =& \set{  \tau \lambda^{-k} (\intvc{0}{1}^{m} \times \set{0} +  (j, (\lambda - 1)^{-1}))
 \st j \in \Zset^{m} }
 \end{split}
\end{equation}
and those that are normal to the same hyperplane
\begin{equation}
\label{eq_aecheiPhai2phee8Iechaib5}
\begin{split}
 \mathcal{Q}^\perp_{\lambda, \tau, k}
 \defeq& \set{\partial Q \times \intvc{\tfrac{\tau \lambda^{-k}}{\lambda - 1}}{\tfrac{\tau \lambda^{-(k-1)}}{\lambda - 1}} \st Q \in \mathcal{Q}_{\lambda, \tau, k}}\\
 =& \set{  \tau \lambda^{-k} ((\partial \intvc{0}{1}^{m}) \times \intvc{0}{1} +  (j, (\lambda - 1)^{-1}))
 \st j \in \Zset^{m} }.
 \end{split}
\end{equation}
Given \(h \in \Rset^m\), we also define the corresponding translated sets
\begin{equation} \label{dcubesf}
\begin{split}
  \mathcal{Q}_{\lambda, \tau, k, h}
  &\defeq \set{Q + \tau \lambda^{-k} h \st Q \in \mathcal{Q}_{\lambda, \tau, k}},\\
  \mathcal{Q}^+_{\lambda, \tau, k, h}
  &\defeq \set{Q + \tau \lambda^{-k} (h, 0) \st Q \in \mathcal{Q}^+_{\lambda, \tau, k}},\\
  \mathcal{Q}^\parallel_{\lambda, \tau, k, h}
  &\defeq \set{\Sigma + \tau \lambda^{-k} (h, 0) \st \Sigma \in \mathcal{Q}^\parallel_{\lambda, \tau, k}},\\
    \mathcal{Q}^\perp_{\lambda, \tau, k, h}
  &\defeq \set{\Sigma + \tau \lambda^{-k} (h, 0) \st \Sigma \in \mathcal{Q}^\perp_{\lambda, \tau, k}}.
  \end{split}
  \end{equation}

\subsection{Longitudinal estimate}
We first have an estimate on the maximal mean oscillation on longitudinal faces of cubes of the \(\lambda\)--adic decomposition of \(\Rset^{m + 1}_+\).

\begin{proposition}
\label{proposition_longitudinal}
For every \(m \in \Nset \setminus \set{0}\), there exists a constant \(C = C(m) \in \intvo{0}{\infty}\) such that for every \(p \in \intvr{1}{\infty}\), for every measurable map \(u : \Rset^m \to \manifold{N}\), for every \(\lambda \in \intvr{2}{\infty}\) and for every \(\delta \in \intvr{0}{\infty}\), one has 
\begin{equation}
\label{eq_ohTae0erahK4sohhohraec8s}
  \int_1^\lambda \sum_{k \in \Zset} \int_{\intvc{0}{1}^{m}} \sum_{\Sigma \in \mathcal{Q}_{\lambda, \tau, k, h}^\parallel} \sup_{x \in \Sigma} 
   \meanosc_{\delta, p} u (x) \dif h \frac{\dif \tau}{\tau}
   \le C 
 \smashoperator{\iint_{\Rset^{m} \times \Rset^{m}}} \frac{\brk{d \brk{u (y), u (z)} - \delta}_+^p}{\abs{y - z}^{2 m}} \dif y \dif z.
\end{equation}

\end{proposition}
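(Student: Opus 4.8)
The plan is to tame the supremum on each longitudinal face by a cube‑localised average \emph{at the natural scale} $\rho$, and then to conclude by an elementary change of variables and a radial integration. Write $g (y, z) \defeq \brk{d (u (y), u (z)) - \delta}_+^p$, fix $\lambda \in \intvr{2}{\infty}$, $\tau \in \intvr{1}{\lambda}$ and $k \in \Zset$, and set $\rho \defeq \tau \lambda^{-k} / (\lambda - 1)$. By \eqref{eq_eWiel4oa8looqu8yohph3thi} and \eqref{dcubesf}, the faces of $\mathcal{Q}^\parallel_{\lambda, \tau, k, h}$ are naturally indexed by $j \in \Zset^m$, each such face $\Sigma_j$ being of the form $\Sigma_j = Q_{\Sigma_j} \times \set{\rho}$ with $Q_{\Sigma_j}$ a cube of $\Rset^m$ of side length $\tau \lambda^{-k} = (\lambda - 1) \rho$, and $\meanosc_{\delta, p} u (x) = \fint_{\Bset^m_\rho (x')} \fint_{\Bset^m_\rho (x')} g$ for $x = (x', \rho) \in \Sigma_j$. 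I cover each $Q_{\Sigma_j}$ by the $\ell^m$ cubes of side $\rho$ obtained by tiling a cube of side $\ell \rho \ge (\lambda - 1)\rho$ containing it, where $\ell \defeq \ceil{\lambda - 1}$; denote these cubes by $q_{j, l}$, indexed additionally by $l \in \set{0, \dotsc, \ell - 1}^m$, and by $c_{j, l} (h)$ their centres.

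The essential point is to keep the averaging radius of order $\rho$. For $x' \in q_{j, l}$ one has $\Bset^m_\rho (x') \subseteq \Bset^m_{R_0 \rho} (c_{j, l} (h))$ with $R_0 \defeq 1 + \sqrt{m}/2$, and since $\mathcal{L}^m (\Bset^m_\rho (x'))$ does not depend on $x'$, summing a crude union bound over the covering cubes gives
\begin{equation*}
 \sup_{x \in \Sigma_j} \meanosc_{\delta, p} u (x)
 \le \sum_{l \in \set{0, \dotsc, \ell - 1}^m} \frac{1}{\mathcal{L}^m (\Bset^m_\rho)^2} \smashoperator{\iint_{\Bset^m_{R_0\rho} (c_{j, l} (h))^2}} g .
\end{equation*}
Now sum over $j \in \Zset^m$ and integrate over $h \in \intvc{0}{1}^m$. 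For each fixed pair $(j, l)$ the map $h \mapsto c_{j, l} (h)$ is affine with Jacobian $((\lambda - 1)\rho)^m$ and image a cube of side $(\lambda - 1)\rho$, and for $l$ fixed these cubes tile $\Rset^m$ as $j$ ranges over $\Zset^m$. Combining this with Fubini's theorem in the form $\int_{\Rset^m} \iint_{\Bset^m_{R_0\rho} (c)^2} g \dif c \le \mathcal{L}^m (\Bset^m_{R_0\rho}) \iint_{\abs{y - z} \le 2 R_0\rho} g$, and with the elementary bound $\ell^m/(\lambda - 1)^m \le (\lambda/(\lambda - 1))^m \le 2^m$ valid since $\lambda \ge 2$, the factor $\ell^m$ produced by the covering gets absorbed and one obtains
\begin{equation*}
 \int_{\intvc{0}{1}^m} \sum_{\Sigma \in \mathcal{Q}^\parallel_{\lambda, \tau, k, h}} \sup_{x \in \Sigma} \meanosc_{\delta, p} u (x) \dif h
 \le \frac{C_1 (m)}{\rho^{2 m}} \smashoperator{\iint_{\abs{y - z} \le 2 R_0 \rho}} g (y, z) \dif y \dif z
\end{equation*}
for a constant $C_1 (m) \in \intvo{0}{\infty}$ depending only on $m$.

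Finally I sum over $k \in \Zset$ and integrate $\dif\tau/\tau$ over $\intvr{1}{\lambda}$. Since the faces $\mathcal{Q}^\parallel_{\lambda, \tau, k, h}$ depend on $(\tau, k)$ only through $s \defeq \tau \lambda^{-k}$, and $(\tau, k) \mapsto s$ maps $\intvr{1}{\lambda} \times \Zset$ onto $\intvo{0}{\infty}$ up to a null set while carrying $\dif\tau/\tau$ to $\dif s/s$, and since $\rho = s/(\lambda - 1)$, the substitution $r = s/(\lambda - 1)$ bounds the left‑hand side of \eqref{eq_ohTae0erahK4sohhohraec8s} by
\begin{equation*}
 C_1 (m) \int_0^\infty \frac{1}{r^{2 m}} \smashoperator{\iint_{\abs{y - z} \le 2 R_0 r}} g (y, z) \dif y \dif z \frac{\dif r}{r} ,
\end{equation*}
and one last application of Fubini's theorem to the radial integral, $\int_0^\infty r^{-2 m - 1}\iint_{\abs{y - z} \le 2 R_0 r} g = \tfrac{(2 R_0)^{2 m}}{2 m}\iint \abs{y - z}^{-2 m} g$, yields \eqref{eq_ohTae0erahK4sohhohraec8s} with $C = C (m) = C_1 (m) (2 R_0)^{2 m}/(2 m)$. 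I expect the only genuinely delicate step to be the treatment of the supremum in the second paragraph: bounding $\sup_{x \in \Sigma_j} \meanosc_{\delta, p} u (x)$ by a single average over a ball of radius of order $(\lambda - 1)\rho$ (the side length of $Q_{\Sigma_j}$) would cost a factor $\lambda^{2 m}$ and destroy the $\lambda$‑independence of the constant; subdividing $Q_{\Sigma_j}$ into $\ell^m \le \lambda^m$ cubes of the natural side $\rho$ and then using that the translation parameter $h$ is normalised in units of $(\lambda - 1)\rho$ rather than $\rho$ is precisely what makes these two powers of $\lambda$ cancel.
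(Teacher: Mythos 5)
Your proof is correct and follows essentially the same route as the paper's: both arguments control the supremum of \(\meanosc_{\delta,p}u\) on each longitudinal face by integrals of \(\brk{d(u(y),u(z))-\delta}_+^p\) over regions of diameter comparable to the averaging radius \(\rho=\tau\lambda^{-k}/(\lambda-1)\), sum over the tiling at a bounded overlap cost, and let the integration over the scales \((\tau,k)\) generate the kernel \(\abs{y-z}^{-2m}\). The only difference is technical rather than conceptual: the paper bounds the supremum over an entire face by a single integral over the tripled cube \(3Q\) restricted to \(\abs{y-z}\le 2\rho\) (so the \(h\)-integration plays no real role in the longitudinal estimate), whereas you subdivide each face into \(\sim\lambda^m\) subcubes of the natural side \(\rho\), apply a union bound, and use the \(h\)-average to recombine the resulting balls into a tiling of \(\Rset^m\); both devices cancel the same potential loss of \(\lambda^{2m}\).
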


\begin{proof}%
\resetconstant
For any \(\Sigma \in \mathcal{Q}_{\lambda, \tau, k, h}^\parallel\), we can write, in view of \eqref{eq_eWiel4oa8looqu8yohph3thi} and \eqref{dcubesf}, 
\[\Sigma = Q \times \set{ \tfrac{\tau \lambda^{-k}}{\lambda - 1}},
\]
where \(Q \in \mathcal{Q}_{\lambda, \tau, k, h}\). If \(x \in \Sigma\) and \(y \in \Rset^{m}\) satisfy \(\abs{x' - y} \le \tfrac{\tau \lambda^{-k}}{\lambda - 1}\), then \(x' \in Q\) and we have immediately that 
\begin{equation}\label{eq28_section2.1}
 \dist(y, Q) \le \abs{x' - y} \le \tfrac{\tau \lambda^{-k}}{\lambda - 1}.
\end{equation}
Since \(Q\) is a cube of edge length \(\tau \lambda^{-k}\), according to \eqref{eq28_section2.1}, we have
\begin{equation*}
y \in (1 + \tfrac{2}{\lambda - 1}) Q = \tfrac{\lambda + 1}{\lambda - 1} Q
\end{equation*}
under the convention that \(\tfrac{\lambda + 1}{\lambda - 1} Q\) is the cube with the same center as \(Q\) dilated by a factor \(\frac{\lambda + 1}{\lambda - 1}\). Thus, \(y \in  3Q
\), since \(\lambda \ge 2\).
One has then, by monotonicity of the integral, for every \(x \in \Sigma\),
\begin{equation}
\label{eq_euXungei0adee9aeX3Iec6Ee}
\begin{split}
 \meanosc_{\delta, p} u (x)
 &= \frac{1}{\mathcal{L}^{m}(\Bset^{m}_{1})^{2}} \brk[\Big]{\frac{ \lambda^k (\lambda - 1)}{\tau}}^{2m} \smashoperator{\iint_{\substack{(y, z) \in \Rset^{m} \times \Rset^{m}\\ \abs{y - x'}\le \frac{\tau \lambda^{-k}}{\lambda - 1}\\ 
 \abs{z - x'}\le \frac{\tau \lambda^{-k}}{\lambda - 1}
 }}} \brk{d \brk{u (y), u (z)} - \delta}_+^p \dif y \dif z\\
 &\le \frac{1}{\mathcal{L}^{m}(\Bset^{m}_{1})^{2}}\brk[\Big]{\frac{\lambda^k (\lambda - 1)}{\tau}}^{2m}\smashoperator{\iint_{\substack{(y,z)\in 3Q \times 3Q \\ \abs{y-z}\le \frac{2\tau\lambda^{-k}}{\lambda-1}}}} \brk{d \brk{u (y), u (z)} - \delta}_+^p \dif y \dif z.
 \end{split}
\end{equation} 
%Observing that if \(y, z \in 3Q\), then 
%\begin{equation*}
% \abs{y - z} \le \diam (3Q) = 3 \diam (Q)  = \frac{3 \sqrt{m} \tau \lambda^{-k}}{\lambda - 1},
%\end{equation*} 
Summing \eqref{eq_euXungei0adee9aeX3Iec6Ee} over the sets \(\Sigma \in \mathcal{Q}_{\lambda, \tau, k, h}^\parallel\) and integrating the result over the translations \(h \in \intvc{0}{1}^m\), we get
\begin{multline}
\label{eq_quo2NiGhee4xooyeinahmi1v}
\int_{\intvc{0}{1}^{m}}
  \sum_{\Sigma \in \mathcal{Q}_{\lambda, \tau, k, h}^\parallel}   \sup_{x \in \Sigma} 
   \meanosc_{\delta, p} u (x) \dif h\\
   \le \frac{3^{m}}{\mathcal{L}^{m}(\Bset^{m}_{1})^{2}} \brk[\Big]{\frac{\lambda^k (\lambda - 1)}{\tau}}^{2m}\smashoperator{\iint_{\substack{(y, z) \in \Rset^{m} \times \Rset^{m}\\ \abs{y - z} \le \frac{2\tau \lambda^{-k}}{\lambda - 1}}}} \brk{d \brk{u (y), u (z)} - \delta}_+^p \dif y \dif z.
\end{multline}
Summing \eqref{eq_quo2NiGhee4xooyeinahmi1v} over the scales \(k \in \Zset\) and integrating the result over \(\tau \in \intvc{1}{\lambda}\), we get
\begin{equation*}
\label{eq_eeReuxah7thaighiich3Iich}
\begin{split}
   \int_1^\lambda \sum_{k \in \Zset} &\int_{\intvc{0}{1}^{m}}
  \sum_{\Sigma \in \mathcal{Q}_{\lambda, \tau, k, h}^\parallel}   \sup_{x \in \Sigma} 
   \meanosc_{\delta, p} u (x) \dif h
   \frac{\dif \tau}{\tau}\\
 &\le \frac{3^{m}(\lambda - 1)^{2 m}}{\mathcal{L}^{m}(\Bset^{m}_{1})^{2}}  
 \smashoperator[l]{\iint_{\Rset^{m} \times \Rset^{m}}} \sum_{k \in \Zset}
 \smashoperator[r]{\int_{\substack{\tau \in \intvo{1}{\lambda}\\ \tau \ge \frac{(\lambda - 1)\lambda^k \abs{y - z}}{2}}}}
 \frac{\lambda^{2k m } \brk{d \brk{u (y), u (z)} - \delta}_+^p}{\tau^{2 m + 1}} \dif \tau \dif y \dif z\\
 &= \frac{3^{m}(\lambda - 1)^{2 m}}{\mathcal{L}^{m}(\Bset^{m}_{1})^{2}}   
 \smashoperator[l]{\iint_{\Rset^{m} \times \Rset^{m}}} \sum_{k \in \Zset}
 \smashoperator[r]{\int_{\substack{\theta \in \intvo{\lambda^{-k}}{\lambda^{-\brk{k - 1}}} \\ \theta \ge \frac{(\lambda - 1)\abs{y - z}}{2}}}}
 \frac{\brk{d \brk{u (y), u (z)} - \delta}_+^p}{\theta^{2 m + 1}} \dif \theta \dif y \dif z\\
 &= \frac{3^{m}\brk{\lambda - 1}^{2m}}{\mathcal{L}^{m}(\Bset^{m}_{1})^{2}}  \smashoperator{\iint_{\Rset^{m} \times \Rset^{m}} }
 \int_{\frac{(\lambda - 1) \abs{y - z}}{2}}^\infty \frac{\brk{d \brk{u (y), u (z)} - \delta}_+^p}{\theta^{2 m + 1}} \dif \theta \dif y \dif z\\
 &= \frac{12^{m}}{2m\mathcal{L}^{m}(\Bset^{m}_{1})^{2}} 
 \smashoperator{\iint_{\Rset^{m} \times \Rset^{m}}} \frac{\brk{d \brk{u (y), u (z)} - \delta}_+^p}{\abs{y - z}^{2 m}} \dif y \dif z,
\end{split}
\end{equation*}
which implies the announced conclusion \eqref{eq_ohTae0erahK4sohhohraec8s} and completes our proof of \cref{proposition_longitudinal}.
\end{proof}

\subsection{Transversal estimate}
We next prove a counterpart of \cref{proposition_longitudinal}, where we estimate the maximal mean oscillation on transversal faces of cubes of the \(\lambda\)--adic decomposition of \(\Rset^{m + 1}_+\) instead of the longitudinal ones.

\begin{proposition}
\label{proposition_transversal}
For every \(m \in \Nset \setminus \set{0}\) and \(p \in \intvr{1}{\infty}\), there exists a constant \(C = C(m, p) \in \intvo{0}{\infty}\) such that for every measurable map \(u : \Rset^m \to \manifold{N}\), for every \(\lambda \in \intvr{2}{\infty}\) and for every \(\delta \in \intvr{0}{\infty}\), one has 
 \begin{equation}
 \label{eq_Quo8ief9gaidahfa0aebeir4}
   \int_1^\lambda \sum_{k \in \Zset} \int_{\intvc{0}{1}^{m}}
  \sum_{\Sigma \in \mathcal{Q}^\perp_{\lambda, \tau, k, h}}   \sup_{x \in \Sigma} 
   \meanosc_{\delta, p} u (x)\dif h \frac{\dif \tau}{\tau}\\
 \le C \smashoperator{\iint_{\Rset^{m} \times \Rset^{m}}} \frac{\brk{d \brk{u (y), u (z)} - \frac{\delta}{2}}_+^p}{\abs{y - z}^{2 m}} \dif y \dif z.
\end{equation}
 
\end{proposition}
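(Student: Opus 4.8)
The plan is to follow the architecture of the proof of \cref{proposition_longitudinal}, but with two extra ingredients forced by the geometry of the transversal faces. Fix \(\lambda \in \intvr{2}{\infty}\), \(\tau \in \intvc{1}{\lambda}\), \(k \in \Zset\), \(h \in \intvc{0}{1}^m\), and set \(\ell \defeq \tau \lambda^{-k}\); a set \(\Sigma \in \mathcal{Q}^\perp_{\lambda, \tau, k, h}\) has the form \(\Sigma = \partial Q \times \intvc{\frac{\ell}{\lambda - 1}}{\frac{\lambda \ell}{\lambda - 1}}\) with \(Q\) a cube of edge length \(\ell\). The difficulty absent from \cref{proposition_longitudinal} is that on \(\Sigma\) the last coordinate \(x_{m + 1}\) — and hence the radius of the ball in \eqref{eq_oJeeneithove3ahceid9Odah} — ranges over a whole interval of ratio \(\lambda\). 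To handle this I would first decompose that interval into the \(N \simeq \log_2 \lambda\) dyadic blocks \(\intvc{s_i}{2 s_i}\), \(s_i \defeq 2^i \tfrac{\ell}{\lambda - 1}\), and on each block cover \(\partial Q\) by the balls \(\Bset^m_{s_i}(x)\), \(x\) ranging over a maximal \(s_i\)-separated subset \(E_i \subseteq \partial Q\); since \(\partial Q\) is a skeleton of codimension one, \(\# E_i \le C \max \set{1, \ell / s_i}^{m - 1}\), the dilated balls \(\Bset^m_{3 s_i}(x)\), \(x\in E_i\), have bounded overlap, and they all lie in the \(3 s_i\)-neighbourhood of the global grid skeleton \(\mathcal{S}_\ell (h) \defeq \bigcup_Q \partial Q\). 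Using \eqref{ineqestmeanosc0} together with the elementary inequality \(\meanosc_{\delta/2, p}^\sharp u (x', t) \le (t_0/t)^m \meanosc_{\delta/2, p}^\sharp u (x_0', t_0)\), valid whenever \(\Bset^m_t(x') \subseteq \Bset^m_{t_0}(x_0')\) because the inner integrand of \eqref{eq_oJeeneithove3ahceid9Odah} is nonnegative, and then \(\meanosc^\sharp_{\delta/2, p} u \le \meanosc_{\delta/2, p} u\), one gets for \(x = (x', t) \in \Sigma\) with \(t \in \intvc{s_i}{2 s_i}\) and \(x' \in \Bset^m_{s_i}(x_{i,j})\) that \(\meanosc_{\delta, p} u(x) \le C \meanosc_{\delta/2, p} u (x_{i,j}, 3 s_i)\) — it is here, and only here, that the constant acquires its dependence on \(p\), through the factor \(2^p\) of \eqref{ineqestmeanosc0} — hence \(\sup_{x \in \Sigma} \meanosc_{\delta, p} u(x) \le C \sum_{i = 0}^N \sum_{x \in E_i} \meanosc_{\delta/2, p} u(x, 3 s_i)\). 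Writing each term as a double average, summing over \(\Sigma \in \mathcal{Q}^\perp_{\lambda, \tau, k, h}\), and invoking the bounded overlap just mentioned bounds the result by \(C \sum_{i = 0}^N s_i^{-2m} \iint_{|y - z| \le C s_i,\; \dist(y, \mathcal{S}_\ell (h)) \le C s_i} \brk{d(u(y), u(z)) - \tfrac{\delta}{2}}_+^p \dif y \dif z\).

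The codimension enters decisively upon integrating over the translations \(h\): since \(\mathcal{S}_\ell(h) = \set{x \in \Rset^m \st x_r \in \ell h_r + \ell \Zset \text{ for some } r}\), one has \(\int_{\intvc{0}{1}^m} \chi_{\set{\dist(y, \mathcal{S}_\ell(h)) \le C s_i}} \dif h \le C \min \set{1, s_i/\ell}\), so the proliferation of \(\log \lambda\) height-scales is compensated by the \(\simeq s_i \ell^{m - 1}\) measure of the slab around the skeleton — one power of \(\ell\) less than a full cube. After integrating over \(h\) I would sum over \(k \in \Zset\) and integrate over \(\tau \in \intvc{1}{\lambda}\) via the identity \(\sum_{k \in \Zset} \int_1^\lambda F(\tau \lambda^{-k}) \tfrac{\dif \tau}{\tau} = \int_0^\infty F(\theta) \tfrac{\dif \theta}{\theta}\) with \(\theta = \ell\), convert the geometric sum over \(i\) into an integral over \(s\), and exchange the order of integration; the inner integral that then appears is \(\int \min \set{1, s/\theta} \tfrac{\dif \theta}{\theta}\) over an interval of \(\theta\) of ratio \(\simeq \lambda\), and a split at \(\theta = s\) shows it is bounded by an absolute constant, \emph{uniformly in} \(\lambda \ge 2\) and \(s > 0\). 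What remains is \(C \int_0^\infty s^{-2m - 1} \iint_{|y - z| \le C s} \brk{d(u(y), u(z)) - \tfrac{\delta}{2}}_+^p \dif y \dif z \, \dif s\), which equals the right-hand side of \eqref{eq_Quo8ief9gaidahfa0aebeir4} once the \(s\)-integral is carried out.

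The main obstacle is precisely this quantitative tension. Bounding \(\sup_{x \in \Sigma} \meanosc_{\delta, p} u(x)\) by its worst case over the height interval costs a factor \(\lambda^{2m}\), and merely decomposing that interval dyadically still leaves a factor \(\log \lambda\); only the fact that the transversal faces lie over a \textbf{codimension-one} grid skeleton — so that the translated slabs \(\set{\dist(\cdot, \mathcal{S}_\ell(h)) \le C s_i}\) have \(h\)-averaged density \(\simeq s_i / \ell\) — removes \emph{both} the polynomial and the logarithmic loss simultaneously. Getting this bookkeeping right, and in particular verifying the uniform-in-\(\lambda\) bound on \(\int \min \set{1, s/\theta} \tfrac{\dif \theta}{\theta}\) over its \(\lambda\)-dependent range of integration, is the delicate part of the argument; everything else is a careful but essentially routine adaptation of the proof of \cref{proposition_longitudinal}.
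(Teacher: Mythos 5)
Your argument is correct, but it takes a genuinely different route from the paper's. The paper exploits the codimension-one nature of the transversal skeleton at the pointwise level: since \(x' \in \partial Q\), the infimum in the sharp oscillation \(\meanosc^\sharp_{\delta/2,p}u(x)\) from \eqref{ineqestmeanosc0} is bounded by the \emph{average} over \(E_x = \set{z \in \partial Q \st \abs{z - x'}\le x_{m+1}}\), a set of \(\mathcal{H}^{m-1}\)-measure \(\simeq x_{m+1}^{m-1}\); this yields a bound with kernel \(\abs{y-z}^{-(2m-1)}\) and \(z\) restricted to \(\partial Q\), the integration over the translations \(h\) converts \(\sum_Q \int_{\partial Q} \dif\mathcal{H}^{m-1}\) into \(\frac{2m}{\tau\lambda^{-k}}\int_{\Rset^m}\dif z\) by Fubini, and the final \(\tau\)-integration restores the missing power of \(\abs{y-z}\). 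There is no dyadic decomposition in the height variable and no logarithm to beat, because taking the worst point of \(\Sigma\) costs only the bounded factor coming from the measure of \(E_x\) relative to \(x_{m+1}^{m-1}\). You instead pay the price of decomposing the height interval into \(\simeq\log_2\lambda\) blocks and covering \(\partial Q\) at each scale, and you recover the loss through the \(h\)-averaged density \(\min\set{1, s_i/\ell}\) of the neighbourhood of the grid skeleton, the geometric sum \(\sum_i \min\set{1, 2^i/(\lambda-1)}\) being bounded uniformly in \(\lambda\). Both proofs hinge on the same geometric fact (the traces of the transversal faces on \(\Rset^m\) form a codimension-one grid), but the paper's packaging is shorter and avoids the covering and overlap bookkeeping; your version has the mild advantage of using only the plain mean oscillation \(\meanosc_{\delta/2,p}u\) at auxiliary points rather than the averaged form of \(\meanosc^\sharp\) over a lower-dimensional set, at the cost of the careful uniform-in-\(\lambda\) verification you correctly identify as the delicate step. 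All the individual claims you make (the monotonicity of \(\meanosc^\sharp\) under inclusion of balls, the cardinality and bounded overlap of the \(s_i\)-nets on \(\partial Q\) using \(s_i \le 2\ell\), the \(h\)-measure of the slabs, and the uniform bound on \(\int\min\set{1,s/\theta}\frac{\dif\theta}{\theta}\)) check out.
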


\begin{proof}
We consider a set \(\Sigma \in \mathcal{Q}^\perp_{\lambda, \tau, k, h}\), that we can write, in view of \eqref{eq_aecheiPhai2phee8Iechaib5} and \eqref{dcubesf}, as 
\begin{equation}\label{001sigmadef001}
\Sigma = \partial Q \times \intvc{\tfrac{\tau \lambda^{-k}}{\lambda - 1}}{\tfrac{\tau \lambda^{-(k-1)}}{\lambda - 1}},
\end{equation}
where \(Q \in \mathcal{Q}_{\lambda, \tau, k, h}\).
We first note that by \eqref{eq_oJeeneithove3ahceid9Odah}, convexity and the triangle inequality,
\begin{equation}
\label{eq_rfoerjfojrifjtrgnvjfknnfg}
\begin{split}
\meanosc_{\delta, p} u (x)
&= \fint_{\Bset^{m}_{x_{m +1}} (x')} \fint_{\Bset^m_{x_{m +1}} (x')} \brk{d (u (y), u (z)) - \delta}_+^p \dif y \dif z \\
&\le 2^{p - 1} \brk[\bigg]{\fint_{\Bset^m_{x_{m +1}} (x')} \fint_{\Bset^m_{x_{m +1}} (x')} \fint_{E_{x}} \brk{d (u (y), u (w)) - \tfrac{\delta}{2}}_+^p  \dif y \dif z \dif w\\
& \qquad\qquad  +   \fint_{\Bset^m_{x_{m +1}} (x')} \fint_{\Bset^m_{x_{m +1}} (x')} \fint_{E_{x}} \brk{d (u (z), u (w)) - \tfrac{\delta}{2}}_+^p \dif y \dif z \dif w}\\
&= 2^{p} \fint_{\Bset^{m}_{x_{m +1}} (x')} \fint_{E_{x}}  \brk{d (u (y), u (z)) - \tfrac{\delta}{2}}_+^p \dif y \dif z,
\end{split}
\end{equation}
where 
\[
E_{x}\defeq \set{z \in \partial Q \st \abs{z-x'} \le x_{m+1}}.
\]
For every \(x = \brk{x', x_{m + 1}} \in \Sigma\), since \(\lambda \ge 2\) and \(Q\) is a cube of edge length \(\tau \lambda^{-k}\), we have \( \tau \lambda^{-k}\ge x_{m+1}\) and thus
\begin{equation}
\label{eq_ushuX0Ael8ci9xuu8jeem0ah}
 \mathcal{H}^{m - 1} \brk{E_x}
 \ge \frac{x_{m + 1}^{m-1}}{2^{m - 1}} \mathcal{L}^{m - 1} \brk{\Bset^{m - 1}_{1}}.
\end{equation}
One has then for every \(x \in \Sigma\), by \eqref{001sigmadef001}, \eqref{eq_rfoerjfojrifjtrgnvjfknnfg}, \eqref{eq_ushuX0Ael8ci9xuu8jeem0ah} and by monotonicity of the integral,
\begin{equation}
\label{eq_ieHis6iuLaec3iedohmaigoh}
\begin{split}
 \meanosc_{\delta, p} u (x)
 &
 \le \frac{\C}{x_{m+1}^{2m - 1}}
\smashoperator{\iint_{\substack{(y,z) \in \Rset^{m} \times \partial Q \\ \abs{y-x'} \le x_{m+1} \\ \abs{z-x'} \le x_{m+1}}}} \brk{d \brk{u (y), u (z)} - \tfrac{\delta}{2}}_+^p \dif y \dif z\\
 &\le \Cl{cst_aif9Haethah7xa5oedeV7xie}
 \smashoperator{\iint_{\substack{(y, z) \in \Rset^{m}\times \partial Q\\ \abs{y - z} \le \frac{2\tau\lambda^{-(k-1)}}{\lambda - 1}}}}
 \frac{\brk{d \brk{u (y), u (z)} - \frac{\delta}{2}}_+^p}{\abs{y  - z}^{2 m - 1}} \dif y \dif z,
 \end{split}
\end{equation} 
where the constant \(\Cr{cst_aif9Haethah7xa5oedeV7xie} \in (0, \infty)\) depends only on $m$ and $p$. Summing \eqref{eq_ieHis6iuLaec3iedohmaigoh} over the sets \(\Sigma \in \mathcal{Q}^\perp_{\lambda, \tau, k,h}\) and integrating the result with respect to the translations \(h\) over \(\intvc{0}{1}^{m}\), we have
\begin{equation}
\label{eq_FooxoN7thahyair0jeoy3aej}
\begin{split}
&\int_{\intvc{0}{1}^{m}}
  \sum_{\Sigma \in \mathcal{Q}^\perp_{\lambda, \tau, k, h}}   \sup_{x \in \Sigma }
   \meanosc_{\delta, p} u (x) \dif h \\
   &\hspace{2em}\le \Cr{cst_aif9Haethah7xa5oedeV7xie} \int_{\intvc{0}{1}^{m}}
   \sum_{Q \in \mathcal{Q}_{\lambda, \tau, k, h}}
 \smashoperator[r]{\iint_{\substack{(y, z) \in \Rset^m \times \partial Q\\ \abs{y - z} \le \frac{2\tau\lambda^{-(k-1)}}{\lambda - 1}}}} 
 \frac{\brk{d \brk{u (y), u (z)} - \frac{\delta}{2}}_+^p}{\abs{y  - z}^{2 m - 1}} \dif y \dif z \\
   &\hspace{2em}= \frac{\Cl{cst_seuveehaiShaifae6eijejah}  \lambda^k}{\tau} 
   \smashoperator{\iint_{\substack{(y, z) \in \Rset^{m} \times \Rset^{m}\\ \abs{y - z} \le \frac{2\tau\lambda^{-(k-1)}}{\lambda - 1}}}} \frac{\brk{d \brk{u (y), u (z)} - \frac{\delta}{2}}_+^p}{\abs{y - z}^{2 m - 1}} \dif y \dif z,
\end{split}
\end{equation}
where the constant \(\Cr{cst_seuveehaiShaifae6eijejah} \in (0, \infty)\) depends only on \(m\) and \(p\). In \eqref{eq_FooxoN7thahyair0jeoy3aej} we have used the fact that for every \(f \colon \Rset^{m} \to \intvc{0}{\infty}\), in view of  Fubini's theorem and change of variables, it holds
\[
\begin{split}
  \int_{\intvc{0}{1}^m} \sum_{Q \in \mathcal{Q}_{\lambda, \tau, k, h}} 
  \brk[\Big]{\int_{\partial Q} f \brk{z} \dif z} \dif h 
  &= \brk{\tau \lambda^{-k}}^{m - 1} \int_{\intvc{0}{1}^m} \sum_{Q \in \mathcal{Q}_{\lambda, 1, 0, h}}
  \brk[\Big]{\int_{\partial Q} f \brk{\tau \lambda^{-k} z} \dif z} \dif h \\
  & = 2m \brk{\tau \lambda^{-k}}^{m - 1} \int_{\Rset^m} f  \brk{\tau \lambda^{-k} z} \dif z\\
  & = \frac{2m \lambda^k}{\tau} \int_{\Rset^m} f.
\end{split}
\]
Thus, summing and integrating  \eqref{eq_FooxoN7thahyair0jeoy3aej} over the scales, we get
\begin{equation*}
\allowdisplaybreaks
\begin{split}
   \int_1^\lambda \sum_{k \in \Zset} &\int_{\intvc{0}{1}^{m}}
  \sum_{\Sigma \in \mathcal{Q}^\perp_{\lambda, \tau, k,h}}   \sup_{x \in \Sigma} 
   \meanosc_{\delta, p} u (x)\dif h \frac{\dif \tau}{\tau}\\
 &\le \Cr{cst_seuveehaiShaifae6eijejah}
 \smashoperator[l]{\iint_{\Rset^{m} \times \Rset^{m}}} \sum_{k \in \Zset}
 \smashoperator[r]{\int_{\substack{\tau \in \intvo{1}{\lambda}\\ \tau \ge
 \frac{\lambda^k (\lambda - 1)\abs{y - z}}
 {2\lambda}
 }}}
 \frac{\lambda^{k } \brk{d \brk{u (y), u (z)} - \frac{\delta}{2}}_+^p}{\abs{y - z}^{2 m - 1} \tau^{2}} \dif \tau \dif y \dif z\\
 &= \Cr{cst_seuveehaiShaifae6eijejah}  
 \smashoperator[l]{\iint_{\Rset^{m} \times \Rset^{m}}} \sum_{k \in \Zset}
 \smashoperator[r]{\int_{\substack{\theta \in \intvo{\lambda^{-k}}{\lambda^{-\brk{k - 1}}}\\ \theta\ge
 \frac{(\lambda - 1)\abs{y - z}}{2 \lambda}}
 }}
 \frac{\brk{d \brk{u (y), u (z)} - \frac{\delta}{2}}_+^p}{\abs{y - z}^{2 m - 1} \theta^{2}} \dif \theta \dif y \dif z\\
&= \Cr{cst_seuveehaiShaifae6eijejah}  
 \smashoperator[l]{\iint_{\Rset^{m} \times \Rset^{m}}}
 \int_{\frac{(\lambda-1)\abs{y - z}}{2\lambda}}^\infty
 \frac{\brk{d \brk{u (y), u (z)} - \frac{\delta}{2}}_+^p}{\abs{y - z}^{2 m - 1} \theta^{2}} \dif \theta \dif y \dif z\\
 &= \frac{2 \lambda} {\lambda - 1}\Cr{cst_seuveehaiShaifae6eijejah} 
 \smashoperator{\iint_{\Rset^{m} \times \Rset^{m}}} \frac{\brk{d \brk{u (y), u (z)} - \frac{\delta}{2}}_+^p}{\abs{y - z}^{2 m}} \dif y \dif z,
\end{split}
\end{equation*}
so that the conclusion \eqref{eq_Quo8ief9gaidahfa0aebeir4} follows, since \(\lambda \ge 2\).
\end{proof}

\subsection{Combining the estimates}
We close this section by summarizing \cref{proposition_longitudinal,proposition_transversal} in the following statement.

\begin{proposition}
\label{proposition_combined}
For every \(m \in \Nset \setminus \set{0}\) and \(p \in \intvr{1}{\infty}\), there exists a constant \(C =C(m,p)\in (0,\infty)\) such that for every measurable map \(u : \Rset^m \to \manifold{N}\), for every \(\lambda \in \intvr{2}{\infty}\) and for every \(\delta \in \intvr{0}{\infty}\), one has 
 \begin{equation*}
   \int_1^\lambda \sum_{k \in \Zset} \int_{\intvc{0}{1}^{m}}
  \sum_{Q \in \mathcal{Q}^+_{\lambda, \tau, k, h}}   \sup_{x \in \partial Q} 
   \meanosc_{\delta, p} u (x)\dif h \frac{\dif \tau}{\tau}\\
\le C \smashoperator{\iint_{\Rset^{m} \times \Rset^{m}}} \frac{\brk{d \brk{u (y), u (z)} - \frac{\delta}{2}}_+^p}{\abs{y - z}^{2 m}} \dif y \dif z.
\end{equation*}
\end{proposition}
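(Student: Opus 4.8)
The plan is to obtain \cref{proposition_combined} as a bookkeeping assembly of \cref{proposition_longitudinal,proposition_transversal}. First I would decompose the topological boundary of each $\lambda$--adic cube: writing $Q \in \mathcal{Q}^+_{\lambda, \tau, k, h}$ as $Q = Q' \times \intvc{\frac{\tau \lambda^{-k}}{\lambda - 1}}{\frac{\tau \lambda^{-(k-1)}}{\lambda - 1}}$ with $Q' \in \mathcal{Q}_{\lambda, \tau, k, h}$, the set $\partial Q$ is the union of the \emph{lower} longitudinal face $Q' \times \set{\frac{\tau \lambda^{-k}}{\lambda - 1}}$, the \emph{upper} longitudinal face $Q' \times \set{\frac{\tau \lambda^{-(k-1)}}{\lambda - 1}}$, and the transversal part $\partial Q' \times \intvc{\frac{\tau \lambda^{-k}}{\lambda - 1}}{\frac{\tau \lambda^{-(k-1)}}{\lambda - 1}}$. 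As $Q$ runs over $\mathcal{Q}^+_{\lambda, \tau, k, h}$, the lower faces run exactly over $\mathcal{Q}^\parallel_{\lambda, \tau, k, h}$ and the transversal parts exactly over $\mathcal{Q}^\perp_{\lambda, \tau, k, h}$. Since $\meanosc_{\delta, p} u \ge 0$, the supremum over $\partial Q$ is bounded by the sum of the suprema over the three pieces; integrating in $h$ over $\intvc{0}{1}^m$, summing over $k \in \Zset$ and integrating in $\tau$ over $\intvc{1}{\lambda}$ against $\dif \tau / \tau$, the lower-face contribution is then controlled by \cref{proposition_longitudinal} and the transversal one by \cref{proposition_transversal}; since $\brk{d(u(y), u(z)) - \delta}_+ \le \brk{d(u(y), u(z)) - \tfrac{\delta}{2}}_+$ for $\delta \ge 0$, both are absorbed into the $\tfrac{\delta}{2}$--truncated Gagliardo energy on the right-hand side.

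The only point requiring a genuinely new argument is the contribution of the upper faces $Q' \times \set{\frac{\tau \lambda^{-(k-1)}}{\lambda - 1}}$, which do \emph{not} belong to the family $\mathcal{Q}^\parallel$: they lie at the same heights as the lower faces at the coarser scale $k - 1$, but the horizontal cube $Q'$ keeps the smaller edge length $\tau \lambda^{-k}$. I would resist covering each such face by the lower faces of scale $k-1$, since a scale-$(k-1)$ cube overlaps of order $\lambda^m$ scale-$k$ cubes and that multiplicity would destroy the $\lambda$--independence of the constant. Instead I would rerun the proof of \cref{proposition_longitudinal} almost verbatim at scale $k$. Setting $\ell \defeq \tau \lambda^{-k}$ and $h_* \defeq \frac{\tau \lambda^{-(k-1)}}{\lambda - 1} = \frac{\lambda}{\lambda - 1}\,\ell$, one has $\ell \le h_* \le 2\ell$ because $\lambda \ge 2$; hence for $x = (x', h_*)$ with $x' \in Q'$ the ball $\Bset^m_{h_*}(x')$ is contained in $5Q'$ and any two of its points are at distance at most $2 h_*$, so, exactly as in \eqref{eq_euXungei0adee9aeX3Iec6Ee}, $\meanosc_{\delta, p} u (x)$ is at most $\frac{1}{\mathcal{L}^m(\Bset^m_1)^2 \, h_*^{2m}}$ times the integral of $\brk{d(u(y),u(z)) - \delta}_+^p$ over $\set{(y,z) \in 5Q' \times 5Q' \st \abs{y - z} \le 2 h_*}$. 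Compared with \cref{proposition_longitudinal} this merely replaces ``$3Q'$'' by ``$5Q'$'', the truncation radius $\frac{2\tau\lambda^{-k}}{\lambda-1}$ by $\frac{2\tau\lambda^{-(k-1)}}{\lambda-1}$, and multiplies the weight $x_{m+1}^{-2m}$ by the harmless power $\lambda^{-2m}$. Summing over $Q' \in \mathcal{Q}_{\lambda, \tau, k, h}$ (the dilates $5Q'$ having bounded overlap), summing over $k$ and integrating over $h$ and $\tau$, the substitution $\theta = \tau \lambda^{-k}$ collapses the sum and the $\tau$--integral into a single integral $\int_{c\abs{y-z}}^\infty \theta^{-2m-1} \dif \theta \simeq \abs{y - z}^{-2m}$, all powers of $\lambda$ cancelling; thus the upper faces also contribute at most $C(m)$ times the $\delta$--truncated, hence the $\tfrac{\delta}{2}$--truncated, Gagliardo energy.

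Adding the three contributions yields the claim. The main obstacle is therefore conceptual rather than computational: recognizing that the upper faces are genuinely outside both families $\mathcal{Q}^\parallel$ and $\mathcal{Q}^\perp$, so that a cheap reindexing at the coarser scale is not available, and checking that the substitute — a third copy of the longitudinal estimate run at the finer scale, with the height-to-edge ratio $\frac{\lambda}{\lambda-1} \in \intvl{1}{2}$ in place of $\frac{1}{\lambda-1} \in \intvl{0}{1}$ — still produces a constant independent of $\lambda$. Everything else reduces to the monotonicity inequality $\brk{d - \delta}_+ \le \brk{d - \tfrac{\delta}{2}}_+$ and to \cref{proposition_longitudinal,proposition_transversal}.
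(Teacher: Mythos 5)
Your proof is correct. The paper's own proof of \cref{proposition_combined} is the single sentence that it ``follows immediately'' from \cref{proposition_longitudinal,proposition_transversal}, so in spirit you are taking the intended route; but you have correctly observed that the deduction is not literally immediate. The boundary \(\partial Q\) of a cube \(Q \in \mathcal{Q}^+_{\lambda, \tau, k, h}\) consists of its lower face (an element of \(\mathcal{Q}^\parallel_{\lambda, \tau, k, h}\)), its lateral part (an element of \(\mathcal{Q}^\perp_{\lambda, \tau, k, h}\)), and an upper face belonging to neither family; and your diagnosis that majorizing the supremum over the upper face by suprema over the scale-\((k-1)\) lower faces incurs a multiplicity of order \(\lambda^m\) is accurate --- this loss would indeed be fatal, since the \(\lambda\)-independence of \(C\) is precisely what the choice \(\ln \lambda \simeq{}\)gap potential in the proofs of the main theorems relies on. Your fix is correct: at the top height \(\tfrac{\tau\lambda^{-(k-1)}}{\lambda-1} \in \intvc{\tau\lambda^{-k}}{2\tau\lambda^{-k}}\) the radius of the averaging ball remains comparable to the edge length \(\tau\lambda^{-k}\) uniformly in \(\lambda \ge 2\), so rerunning the longitudinal argument with \(3Q\) replaced by \(5Q\) and the substitution \(\theta = \tau\lambda^{-k}\) again produces \(\abs{y-z}^{-2m}\) with all powers of \(\lambda\) cancelling. (An equivalent repair is to note that the sharp-oscillation argument in the proof of \cref{proposition_transversal} applies verbatim to points \((x', x_{m+1})\) with \(x'\) in the interior of the horizontal cube as soon as \(x_{m+1} \ge \tau\lambda^{-k}\), which holds on the upper face; either way some supplement to the two cited propositions is needed.) In short, the proposal is correct and, if anything, more complete than the argument recorded in the paper.
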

\begin{proof}
 This follows immediately from \cref{proposition_longitudinal,proposition_transversal}.
\end{proof}

\section{Proofs of the singular extension theorems}

\subsection{Oscillation and gradient estimate on the skeleton}

We first estimate the average number of cubes on which the extension \(V\) of \(u\) given by \eqref{eq_shee0Ceingooghe2weefei4A} is far away from the range of \(u\).

\begin{proposition}
\label{proposition_goodcubes_gapintegral}
Let \(m \in \Nset \setminus \set{0}\).
There exist constants \(\eta \in \intvo{0}{1}\) and \(C \in \intvo{0}{\infty}\) depending only on $m$ such that for every \(\delta \in \intvo{0}{\infty}\), for every \(\lambda \in \intvr{2}{\infty}\), for every measurable function \(u : \Rset^m \to \Rset^\nu\) and every set \(Y \subseteq \Rset^\nu\), if \(V\) is an extension by convolution given by \eqref{eq_shee0Ceingooghe2weefei4A} and if \(u \in Y \) almost everywhere in \(\Rset^m\), then 
 \begin{multline}
 \label{eq_Aihohz6jeik2eeBioJ4wohgh}
   \int_1^\lambda \sum_{k \in \Zset} \int_{\intvc{0}{1}^{m}}
  \# \set {Q \in \mathcal{Q}^+_{\lambda, \tau, k,h} \st \sup_{x \in \partial Q} 
   \dist (V (x), Y) \ge \delta} \dif h \frac{\dif \tau}{\tau}\\
 \le \frac{C}{\delta^{m+1}}  \smashoperator{\iint_{\Rset^{m} \times \Rset^{m}}} \frac{\brk{d \brk{u (y), u (z)} - \eta \delta}_+^{m + 1}}{\abs{y - z}^{2 m}} \dif y \dif z.
\end{multline}
\end{proposition}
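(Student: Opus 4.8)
The plan is to reduce the statement to \cref{proposition_combined} by comparing the distance of the averaged extension $V$ to the set $Y$ with a truncated mean oscillation of $u$. First I would fix $x = (x', x_{m+1}) \in \Rset^{m+1}_+$ and apply the pointwise bound \eqref{eq_phaejateiGh6ceir1kohngis}, which gives, for a suitable exponent and a free truncation parameter $\sigma \in \intvr{0}{\infty}$,
\begin{equation*}
 \dist(V(x), Y)^{m+1} \le \C \brk{\meanosc_{\sigma, m+1} u (x) + \sigma^{m+1}}.
\end{equation*}
Choosing $\sigma = \eta \delta$ with $\eta \in \intvo{0}{1}$ small enough that $\C (\eta\delta)^{m+1} \le \delta^{m+1}/2$, we see that whenever $\dist(V(x), Y) \ge \delta$ we must have $\meanosc_{\eta\delta, m+1} u (x) \ge c \delta^{m+1}$ for some dimensional constant $c > 0$. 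Hence, by the Chebyshev inequality applied on each transversal face $\partial Q$,
\begin{equation*}
 \#\set{Q \in \mathcal{Q}^+_{\lambda, \tau, k, h} \st \sup_{x \in \partial Q} \dist(V(x), Y) \ge \delta}
 \le \frac{1}{c\,\delta^{m+1}} \sum_{Q \in \mathcal{Q}^+_{\lambda, \tau, k, h}} \sup_{x \in \partial Q} \meanosc_{\eta\delta, m+1} u (x).
\end{equation*}

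Next I would integrate this inequality over $h \in \intvc{0}{1}^m$, sum over $k \in \Zset$, and integrate over $\tau \in \intvc{1}{\lambda}$ with respect to $\dif\tau/\tau$. The right-hand side is then exactly $\frac{1}{c\delta^{m+1}}$ times the quantity estimated in \cref{proposition_combined} with $p = m+1$ and with $\delta$ there replaced by $2\eta\delta$ (recall \cref{proposition_combined} produces a truncation at half its parameter, which is why one works with $2\eta\delta$ so that the final truncation is $\eta\delta$). Invoking \cref{proposition_combined} gives the bound
\begin{equation*}
 \frac{C(m)}{\delta^{m+1}} \smashoperator{\iint_{\Rset^m \times \Rset^m}} \frac{\brk{d(u(y), u(z)) - \eta\delta}_+^{m+1}}{\abs{y-z}^{2m}} \dif y \dif z,
\end{equation*}
which is \eqref{eq_Aihohz6jeik2eeBioJ4wohgh}. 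One must also check measurability of the integrand in $(\tau, h)$ so that the iterated integral makes sense; this is routine since the suprema are over compact faces and $\meanosc_{\delta,p}u$ is continuous on $\Rset^{m+1}_+$.

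The only genuinely delicate point is bookkeeping the truncation parameters: \eqref{eq_phaejateiGh6ceir1kohngis} forces a positive truncation $\sigma$ in $\meanosc_{\sigma, m+1} u$ just to absorb $\sigma^{m+1}$ into $\delta^{m+1}$, and then \cref{proposition_combined} halves whatever truncation one feeds it; tracking these two reductions and verifying that the resulting constant $\eta$ can be chosen depending only on $m$ (the constant $\C$ in \eqref{eq_phaejateiGh6ceir1kohngis} depends only on $m$ and the fixed exponent $p = m+1$, hence only on $m$) is the step requiring the most care. Everything else is a direct assembly of the Chebyshev inequality with \cref{proposition_combined}.
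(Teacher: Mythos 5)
Your proposal is correct and follows essentially the same route as the paper: a pointwise comparison of \(\dist(V(x),Y)\) with a truncated mean oscillation via \eqref{eq_phaejateiGh6ceir1kohngis}, a choice of \(\eta\) small enough (depending only on \(m\)) to absorb the truncation term, Chebyshev's inequality on the cube count, and then \cref{proposition_combined} with \(p=m+1\). The truncation bookkeeping you flag is handled in the paper by working with \(\meanosc_{2\eta\delta,m+1}\) from the start so that \cref{proposition_combined} yields the truncation \(\eta\delta\); your variant would simply produce \(\eta\delta/2\), which is the same statement after renaming \(\eta\).
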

\begin{proof}%
\resetconstant
By \eqref{eq_phaejateiGh6ceir1kohngis}, we have for every \(x \in \Rset^{m + 1}_+\),
\begin{equation}
\label{eq_lah7eifooSeilee7zaiV8boo}
\dist (V (x), Y)^{m + 1}
\le \Cl{cst_moukeiguegoaphaiNg8Siu6h} \brk{\meanosc_{\theta, m + 1} u (x)+ \theta ^{m + 1}}.
\end{equation}
It is worth noting that, according to \eqref{eq_phaejateiGh6ceir1kohngis}, the constant \(\Cr{cst_moukeiguegoaphaiNg8Siu6h}\in (0, \infty)\) depends only on \(m\) and the function \(\varphi\) in the definition of \(V\), since \(p=m+1\). Thus, fixing the function \(\varphi\) in the definition of \(V\) from now on, we can assume that the constant \(\Cr{cst_moukeiguegoaphaiNg8Siu6h}\) depends only on \(m\). Hence, taking 
\begin{equation*}
 \eta \defeq \frac{1}{\brk[\big]{(1+2^{m+1})\Cr{cst_moukeiguegoaphaiNg8Siu6h}}^\frac{1}{m + 1}},
\end{equation*}
if 
\begin{equation}\label{eq_eq_01747}
 \dist (V (x), Y) \ge \delta,
\end{equation}
the inequality
\eqref{eq_lah7eifooSeilee7zaiV8boo} with \(\theta = 2 \eta \delta\) implies that 
\begin{equation}\label{eq_eq_017471}
(\eta \delta)^{m + 1}
\le \meanosc_{2 \eta \delta, m+1}u (x).
\end{equation}

We get, by \eqref{eq_eq_01747}, \eqref{eq_eq_017471} and \cref{proposition_combined} applied with \(p=m+1\),
 \begin{equation*}
 \begin{split}
   \int_1^\lambda \sum_{k \in \Zset} & \int_{\intvc{0}{1}^{m}}
  \# \set {Q \in \mathcal{Q}^+_{\lambda, \tau, k,h} \st \sup_{x \in \partial Q} 
   \dist (V (x), Y) \ge \delta} \dif h \frac{\dif \tau}{\tau}\\
  &\le \int_1^\lambda \sum_{k \in \Zset} \int_{\intvc{0}{1}^{m}}
  \# \set {Q \in \mathcal{Q}^+_{\lambda, \tau, k, h} \st \sup_{x \in \partial Q} 
   \meanosc_{2\eta \delta, m + 1} u(x) \ge (\eta \delta)^{m + 1}} \dif h \frac{\dif \tau}{\tau}\\
   &\le \frac{1}{(\eta \delta)^{m+1}} \int_1^\lambda \sum_{k \in \Zset} \int_{\intvc{0}{1}^{m}} \sum_{Q \in \mathcal{Q}^+_{\lambda, \tau, k, h}} \sup_{x \in \partial Q} 
   \meanosc_{2 \eta \delta, m + 1} u(x) \dif h \frac{\dif \tau}{\tau}\\
  &\le \frac{\Cr{cst_ahxu7teihaeTha5naex4peiR}}{\delta^{m+1}} \smashoperator{\iint_{\Rset^{m} \times \Rset^{m}}} \frac{\brk{d \brk{u (y), u (z)} - \eta \delta}_+^{m + 1}}{\abs{y - z}^{2 m}} \dif y \dif z,
  \end{split}
\end{equation*}
where the constant $\Cl{cst_ahxu7teihaeTha5naex4peiR} \in (0, \infty)$ depends only on $m$. This proves the estimate \eqref{eq_Aihohz6jeik2eeBioJ4wohgh} and completes our proof of Proposition~\ref{proposition_goodcubes_gapintegral}.
\end{proof}

Next, we can prove an average uniform bound on the extension by convolution \(V\).

\begin{proposition}
\label{proposition_goodcubes_sobolev}
Let \(m \in \Nset \setminus \set{0}\).
There exists a constant \(C=C(m)\in (0, \infty)\) such that for every \(\lambda \in \intvr{2}{\infty}\), for every measurable function \(u : \Rset^m \to \Rset^\nu\) , if \(V\) is an extension by convolution given by \eqref{eq_shee0Ceingooghe2weefei4A}, then 
 \begin{equation*}
 \label{eq_oodohh0Eighiedath2oe4ong}
   \int_1^\lambda \sum_{k \in \Zset} \int_{\intvc{0}{1}^{m}}
  \sum_{Q \in \mathcal{Q}^+_{\lambda, \tau, k,h}}
  \sup_{x \in \partial Q} x_{m +1}^{m + 1} \abs{\Deriv V (x)}^{m + 1}  \dif h \frac{\dif \tau}{\tau}
 \le C \smashoperator{\iint_{\Rset^{m} \times \Rset^{m}}} \frac{d \brk{u (y), u (z)}^{m + 1}}{\abs{y - z}^{2 m}} \dif y \dif z.
\end{equation*}
\end{proposition}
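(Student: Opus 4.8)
The plan is to reduce the statement to the pointwise gradient bound \eqref{eq_taerinie4aWee3Phahzefeib} and then to \cref{proposition_combined}, exactly mirroring how \cref{proposition_goodcubes_gapintegral} is deduced from the pointwise distance bound \eqref{eq_phaejateiGh6ceir1kohngis} and \cref{proposition_combined}. No new geometric input is needed, the \(\lambda\)--adic summation having already been carried out in \cref{proposition_longitudinal} and \cref{proposition_transversal} and packaged into \cref{proposition_combined}.

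First I would apply \eqref{eq_taerinie4aWee3Phahzefeib} with \(p = m + 1\) and \(\delta = 0\), so that \(\meanosc_{0, m + 1} u\) reduces to the untruncated mean oscillation \(\fint \fint d (u (y), u (z))^{m + 1} \dif y \dif z\) over the relevant balls. This gives, for every \(x = (x', x_{m + 1}) \in \Rset^{m + 1}_+\),
\[
 x_{m + 1}^{m + 1} \abs{\Deriv V (x)}^{m + 1} \le C \, \meanosc_{0, m + 1} u (x),
\]
the weight \(x_{m + 1}^{m + 1}\) cancelling precisely the factor \(x_{m + 1}^{-(m + 1)}\) produced by \eqref{eq_taerinie4aWee3Phahzefeib}; this cancellation is exactly the reason the statement carries this weight, since it makes both sides invariant under the dilations built into the \(\lambda\)--adic cubes.

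Since the displayed bound holds at every point of \(\Rset^{m + 1}_+\), I would then take \(\sup_{x \in \partial Q}\) of each side for \(Q \in \mathcal{Q}^+_{\lambda, \tau, k, h}\), sum over \(Q\) and over \(k \in \Zset\), integrate over the translations \(h \in \intvc{0}{1}^m\) and over \(\tau \in \intvc{1}{\lambda}\) against \(\dif \tau / \tau\), and invoke \cref{proposition_combined} with \(p = m + 1\) and \(\delta = 0\). This application is admissible since \(m + 1 \ge 1\) and \(0 \in \intvr{0}{\infty}\), and because \((d (u (y), u (z)) - 0)_+^{m + 1} = d (u (y), u (z))^{m + 1}\) its right-hand side is exactly \(C \iint_{\Rset^m \times \Rset^m} d (u (y), u (z))^{m + 1} \abs{y - z}^{-2 m} \dif y \dif z\), which is the announced inequality.

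I do not expect a genuine obstacle: \cref{proposition_combined} is stated for exactly the quantity \(\int_1^\lambda \sum_{k} \int_{\intvc{0}{1}^m} \sum_{Q \in \mathcal{Q}^+_{\lambda, \tau, k, h}} \sup_{x \in \partial Q} \meanosc_{\delta, p} u \,\dif h\, \dif \tau/\tau\) that appears here, the weight in the statement has been chosen precisely so that the pointwise gradient estimate becomes scale-invariant, and all the combinatorial and geometric work is already absorbed into \cref{proposition_longitudinal}, \cref{proposition_transversal} and hence \cref{proposition_combined}. The only assumption worth recalling is that \(u\) should be such that the convolution \eqref{eq_shee0Ceingooghe2weefei4A} defining \(V\) is well defined and smooth on \(\Rset^{m + 1}_+\), which makes \eqref{eq_taerinie4aWee3Phahzefeib} meaningful; this is harmless here and automatic when \(\manifold{N}\) is compact, as in the applications.
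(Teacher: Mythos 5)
Your proposal is correct and coincides with the paper's own proof: the paper likewise applies \eqref{eq_taerinie4aWee3Phahzefeib} with \(\delta = 0\) and \(p = m+1\) to get the pointwise bound \(x_{m+1}^{m+1}\abs{\Deriv V(x)}^{m+1} \le C\,\meanosc_{0,m+1}u(x)\), and then concludes directly from \cref{proposition_combined}. No further comment is needed.
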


\begin{proof}
\resetconstant
We proceed similarly as in the proof of \cref{proposition_goodcubes_gapintegral}, where we assume that the function \(\varphi\) in the definition of \(V\) is fixed. Since by \eqref{eq_taerinie4aWee3Phahzefeib} applied with \(p=m+1\),
\begin{equation*}
  x_{m +1}^{m + 1} \abs{\Deriv V(x)}^{m + 1}
  \le  \Cl{mo3948jut9j459gujt9ungi} \brk{\meanosc_{\theta, m + 1} u (x)+ \theta ^{m + 1}},
\end{equation*}
where we can assume that \(\Cr{mo3948jut9j459gujt9ungi} \in (0, \infty)\) depends only on \(m\), the proof of Proposition~\ref{proposition_goodcubes_sobolev} then follows from \cref{proposition_combined}. Namely,
\begin{equation*}
\begin{split}
\int_1^\lambda \sum_{k \in \Zset} \int_{\intvc{0}{1}^{m}}
 & \sum_{Q \in \mathcal{Q}^+_{\lambda, \tau, k,h}}
  \sup_{x \in \partial Q} x_{m +1}^{m + 1} \abs{\Deriv V (x)}^{m + 1}  \dif h \frac{\dif \tau}{\tau} \\
 & \le \Cr{mo3948jut9j459gujt9ungi}\int_1^\lambda \sum_{k \in \Zset} \int_{\intvc{0}{1}^{m}}
  \sum_{Q \in \mathcal{Q}^+_{\lambda, \tau, k, h}}   \sup_{x \in \partial Q} 
   \meanosc_{0, m+1} u (x)\dif h \frac{\dif \tau}{\tau} \\
 & \le \Cl{n4939ujf99fuhu9h} \smashoperator{\iint_{\Rset^{m} \times \Rset^{m}}} \frac{d \brk{u (y), u (z)}^{m+1}}{\abs{y - z}^{2 m}} \dif y \dif z,
\end{split}
\end{equation*}
where the constant \(\Cr{n4939ujf99fuhu9h} \in (0, \infty) \)  depends only on \(m\).
\end{proof}

\subsection{Sobolev and Sobolev--Marcinkiewicz extensions on cubes}
The first construction we will perform is a classical extension of the boundary data on cubes with small energy. To simplify the presentation, the result is stated on a ball, which is bi-Lipschitzly equivalent to a cube.

\begin{lemma}
\label{lemma_extension}
Let \(m \in \Nset \setminus \set{0}\).
There exists a constant \(C = C(m) \in \intvo{0}{\infty}\) such that for every \(w \in 
W^{1, m + 1} (\Sset^m_\rho, \Rset^\nu)\) there exists a function 
\(W \in W^{1, m + 1} (\Bset^{m + 1}_\rho, \Rset^\nu) \cap C (\Bset^{m + 1}_\rho, \Rset^\nu)\) such that \(\tr_{\Sset^m_\rho} W = w\),
\begin{gather}
  \label{eq_aikahmi6foomei4teisu6ahY}
\int_{\Bset^{m + 1}_\rho} \abs{\Deriv W}^{m + 1}
 \le C \rho \int_{\Sset^{m}_\rho} \abs{\Deriv 
 w}^{m + 1},
\\
\label{eq_xie9ohch4engee6Iekohf1Ie}
\int_{\Bset^{m + 1}_\rho} \int_{\Sset^m_\rho} \abs{W (x) - w (y)}^{m + 1} \dif x
\dif y \le C \rho^{2m + 2} \displaystyle \int_{\Sset^{m}_\rho} \abs{\Deriv w}^{m + 1}\\
\intertext{and for every \(x \in \Bset^{m + 1}_\rho\) and almost every \(y \in \Sset^{m}_{\rho},\)}
\label{eq_ooGhohviu9koohe7zoogiPha}
\abs{W (x) - w (y)}^{m + 1} \le 
C\rho \int_{\Sset^m_{\rho}} \abs{\Deriv w}^{m + 1}.
\end{gather}
\end{lemma}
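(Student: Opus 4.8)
The plan is to construct $W$ by the standard zero-homogeneous extension of a suitable boundary average, so that the gradient estimate follows from a one-dimensional Hardy-type inequality and the $L^{m+1}$-distance estimates follow from Poincaré's inequality on the sphere. First I would reduce to $\rho = 1$ by scaling: if $w_\rho(y) \defeq w(\rho y)$ for $y \in \Sset^m_1$, then $\int_{\Sset^m_1} \abs{\Deriv w_\rho}^{m+1} = \rho^{m+1-m}\int_{\Sset^m_\rho}\abs{\Deriv w}^{m+1} = \rho \int_{\Sset^m_\rho}\abs{\Deriv w}^{m+1}$, which already explains the factor $\rho$ in \eqref{eq_aikahmi6foomei4teisu6ahY} and \eqref{eq_ooGhohviu9koohe7zoogiPha} and the factor $\rho^{2m+2}$ in \eqref{eq_xie9ohch4engee6Iekohf1Ie} (the latter also picks up $\rho^{m+1}$ from $\dif x$ and $\rho^m$ from $\dif y$, i.e.\ $\rho^{2m+1}$, times the $\rho$ from the energy). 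So it suffices to treat $\rho = 1$.

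For $\rho = 1$ the construction is $W(x) \defeq \chi(\abs{x})\, w(x/\abs{x}) + (1 - \chi(\abs{x}))\, \Bar{w}$ for $\abs{x} \in \intvl{0}{1}$ and $W(0) \defeq \Bar{w}$, where $\Bar{w} \defeq \fint_{\Sset^m_1} w$ and $\chi : \intvc{0}{1} \to \intvc{0}{1}$ is a fixed smooth cutoff with $\chi \equiv 0$ near $0$ and $\chi \equiv 1$ near $1$; the key point is that near the origin $W$ is constant, which is what makes $\Deriv W$ integrable to the power $m+1$ in dimension $m+1$ (a purely zero-homogeneous extension $w(x/\abs{x})$ has $\abs{\Deriv W(x)} \sim \abs{x}^{-1}\abs{\Deriv_{\Sset^m} w}$, and $\int_{\Bset^{m+1}_1}\abs{x}^{-(m+1)}\dif x$ diverges logarithmically, so the cutoff is essential). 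In polar coordinates $x = r\omega$ one computes $\abs{\Deriv W(r\omega)}^{m+1} \le \C\, \chi(r)^{m+1} r^{-(m+1)} \abs{\Deriv_{\Sset^m} w(\omega)}^{m+1} + \C\, \chi'(r)^{m+1}\abs{w(\omega) - \Bar{w}}^{m+1}$; integrating $\dif x = r^m \dif r\dif\omega$, the first term gives $\int_0^1 \chi(r)^{m+1} r^{-1}\dif r \cdot \int_{\Sset^m_1}\abs{\Deriv_{\Sset^m}w}^{m+1}$, and $\int_0^1\chi(r)^{m+1}r^{-1}\dif r < \infty$ precisely because $\chi$ vanishes near $0$; the second term is bounded by $\C\int_{\Sset^m_1}\abs{w - \Bar{w}}^{m+1}$, which is controlled by $\C\int_{\Sset^m_1}\abs{\Deriv_{\Sset^m}w}^{m+1}$ via the Poincaré inequality on $\Sset^m$ (note $\Sset^m$ is connected, so this holds for every $m \ge 1$, and for $m = 1$ the Sobolev exponent $m+1 = 2 > 1 = \dim\Sset^1$ makes the embedding into $C(\Sset^1)$ available, while for $m \ge 2$ one uses $W^{1,m+1} \hookrightarrow C^0$ by Morrey). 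This yields \eqref{eq_aikahmi6foomei4teisu6ahY}. For \eqref{eq_ooGhohviu9koohe7zoogiPha}, for any $x = r\omega \in \Bset^{m+1}_1$ and a.e.\ $y \in \Sset^m_1$ we estimate $\abs{W(x) - w(y)} \le \abs{W(x) - \Bar{w}} + \abs{\Bar{w} - w(y)}$; since $W(x)$ lies on the segment between $\Bar{w}$ and $w(\omega)$, $\abs{W(x) - \Bar{w}} \le \abs{w(\omega) - \Bar{w}} \le \norm{w - \Bar{w}}_{L^\infty(\Sset^m_1)}$, and both terms are bounded by $\C\,(\int_{\Sset^m_1}\abs{\Deriv_{\Sset^m}w}^{m+1})^{1/(m+1)}$ by the Sobolev--Morrey embedding combined with Poincaré; raising to the power $m+1$ gives \eqref{eq_ooGhohviu9koohe7zoogiPha}. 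Finally \eqref{eq_xie9ohch4engee6Iekohf1Ie} follows from \eqref{eq_ooGhohviu9koohe7zoogiPha} by integrating in $x$ over $\Bset^{m+1}_1$ (volume finite) and in $y$ over $\Sset^m_1$ (measure finite), at $\rho = 1$, and then undoing the scaling.

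The continuity of $W$ on the closed ball and the trace identity $\tr_{\Sset^m_1} W = w$ are immediate once we know $w \in C(\Sset^m_1, \Rset^\nu)$: indeed $W^{1,m+1}(\Sset^m_1) \hookrightarrow C(\Sset^m_1)$ since $m + 1 > m = \dim \Sset^m$, so $w$ has a continuous representative, $W$ is then continuous on $\Bset^{m+1}_1 \setminus \set{0}$ as a composition, continuous at $0$ because it is locally constant there, and continuous up to $\Sset^m_1$ because $\chi \equiv 1$ near $r = 1$ makes $W(r\omega) = w(\omega)$ for $r$ close to $1$. The main obstacle — really the only delicate point — is getting the scaling bookkeeping exactly right so that the powers of $\rho$ come out as $\rho$, $\rho^{2m+2}$, $\rho$ in \eqref{eq_aikahmi6foomei4teisu6ahY}, \eqref{eq_xie9ohch4engee6Iekohf1Ie}, \eqref{eq_ooGhohviu9koohe7zoogiPha} respectively, and making sure the Poincaré/Sobolev constant on $\Sset^m$ is scale-invariant (it is, being applied at $\rho = 1$); everything else is a routine polar-coordinates computation with the fixed cutoff $\chi$.
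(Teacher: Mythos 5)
Your construction is correct, and it takes a genuinely different route from the paper. The paper first upgrades $w$ via the fractional Sobolev--Morrey embedding to $W^{1-1/p,p}(\Sset^m_\rho)$ with the supercritical exponent $p=(m+1)^2/m$, applies Gagliardo's linear trace extension to get $W\in W^{1,p}(\Bset^{m+1}_\rho)$, and then comes back down to $W^{1,m+1}$ by H\"older, with \eqref{eq_ooGhohviu9koohe7zoogiPha} following from Morrey's inequality in the ball. You instead build $W$ explicitly as the zero-homogeneous extension truncated near the origin, $W(x)=\chi(\abs{x})\,w(x/\abs{x})+(1-\chi(\abs{x}))\Bar{w}$, and you correctly identify the one delicate point: the pure homogeneous extension just fails to lie in $W^{1,m+1}(\Bset^{m+1})$ (logarithmic divergence of $\int\abs{x}^{-(m+1)}$), and the cutoff to the mean value $\Bar{w}$ repairs this at the cost of a Poincar\'e inequality on $\Sset^m$. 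Your scaling bookkeeping for the powers of $\rho$ is right, \eqref{eq_ooGhohviu9koohe7zoogiPha} follows from the Morrey embedding $W^{1,m+1}(\Sset^m)\hookrightarrow C^{0,1/(m+1)}(\Sset^m)$ applied on the sphere (valid since $m+1>m$, including $m=1$), and \eqref{eq_xie9ohch4engee6Iekohf1Ie} is indeed an immediate consequence of \eqref{eq_ooGhohviu9koohe7zoogiPha} by integration, which is even slightly shorter than the paper's separate derivation via the Poincar\'e inequality. What each approach buys: yours is more elementary and self-contained (no fractional spaces, no linear trace theory) but leans on the radial geometry of the ball, which is harmless here since the lemma is only ever applied to cubes through a fixed bi-Lipschitz chart; the paper's produces an extension in the strictly better space $W^{1,p}$ with $p>m+1$, which is the natural by-product of the linear theory, though that extra integrability is not used elsewhere. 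Both ultimately exploit the same supercriticality of $W^{1,m+1}$ on the $m$-dimensional boundary.
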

\begin{proof}
\resetconstant
Let
\begin{align*}
 p \defeq \frac{(m + 1)^2}{m}.
\end{align*}
We have, by the fractional Sobolev--Morrey embedding,
\begin{equation}
\label{eq_onoxihajaite5ceequie1Sho}
 \smashoperator[r]{\iint_{\Sset^m_\rho \times \Sset^m_\rho}}
 \frac{\abs{w (y) - w (z)}^p}{\abs{y - z}^{m + p  - 1}} \dif y \dif z
 \le \C \brk[\bigg]{\int_{\Sset^m_{\rho}} \abs{\Deriv w}^{m + 1}}^{\frac{p}{m+ 1}}.
\end{equation}
Assuming without loss of generality that \(\int_{\Sset^m_{\rho}} w = 0\), Gagliardo's classical trace theory \cite{Gagliardo_1957}  (see also \citelist{\cite{Leoni_2023}*{Th.\ 9.4}\cite{diBenedetto_2016}*{Prop.\ 17.1}\cite{Function}*{Section~6.9}\cite{Mazya_2011}*{Th.~10.1.1.1}}) yields a function \(W \in W^{1, p} (\Bset^{m+1}_\rho, \Rset^\nu)\) such that \(\tr_{\Sset^m_\rho} W = w\) and 
\begin{equation}
\label{eq_zeeLophiob6sheiyi3ieXi8b}
 \int_{\Bset^{m+1}_\rho} \abs{\Deriv W}^{p} + \frac{\abs{W}^{p}}{\rho^p}
 \le \C \smashoperator{\iint_{\Sset^m_\rho \times \Sset^m_\rho}} \frac{\abs{w (y) - w (z)}^{p}}{\abs{y - z}^{m + p - 1}} \dif y \dif z.
\end{equation}
It follows from Hölder's inequality,  \eqref{eq_zeeLophiob6sheiyi3ieXi8b} and \eqref{eq_onoxihajaite5ceequie1Sho}, that 
\begin{equation}
\label{eq_Pheith2daichoh8ExuuF3fod}
 \int_{\Bset^{m+1}_\rho} \abs{\Deriv W}^{m + 1} + \frac{\abs{W}^{m + 1}}{\rho^{m + 1}}
 \le \C\rho \int_{\Sset^m_{\rho}} \abs{\Deriv w}^{m + 1},
\end{equation}
and \eqref{eq_aikahmi6foomei4teisu6ahY} follows then from \eqref{eq_Pheith2daichoh8ExuuF3fod}.

Since for every \(x \in \Bset^{m+1}_\rho\) and \(y \in \Sset^{m}_\rho\),
\begin{equation*}
 \abs{W (x) - w (y)}^{m+1}\le 2^m \brk[\big]{\abs{W (x)}^{m+1}+\abs{w (y)}^{m+1}},
\end{equation*}
we have 
\begin{equation}\label{elementarysobest}
 \int_{\Bset^{m + 1}_\rho} \int_{\Sset^m_\rho} \abs{W (x) - w (y)}^{m + 1} \dif x \dif y
 \le \C \rho^{m} \int_{\Bset^{m + 1}_\rho} \abs{W}^{m + 1} + \C \rho^{m + 1} \int_{\Sset^m_\rho} \abs{w}^{m + 1}.
\end{equation}
The estimate \eqref{eq_xie9ohch4engee6Iekohf1Ie} follows from the estimates \eqref{elementarysobest},  
 \eqref{eq_Pheith2daichoh8ExuuF3fod} and the Poincaré inequality on the sphere.

Finally, by the Morrey--Sobolev inequality, \eqref{eq_zeeLophiob6sheiyi3ieXi8b} and \eqref{eq_onoxihajaite5ceequie1Sho}, we have 
\begin{equation*}
\begin{split}
 \abs{W (x) - w (y)}
 &\le \C \abs{x - y}^{1 - \frac{m + 1}{p}}
 \brk[\bigg]{\int_{\Bset^{m + 1}_\rho} \abs{\Deriv W}^p}^\frac{1}{p}\\
& \le \C \rho^{1 - \frac{m}{m + 1}}
 \brk[\bigg]{\int_{\Sset^m_\rho} \abs{\Deriv w}^{m + 1}}^\frac{1}{m + 1},
 \end{split}
\end{equation*}
and \eqref{eq_ooGhohviu9koohe7zoogiPha} follows.
\end{proof}

When the oscillation is too large on the boundary of cubes, we will perform our construction of the controlled singular extension; the resulting map is quite wild but is sufficiently well controlled to provide an acceptable extension on those cubes.

\begin{lemma}
\label{lemma_homogeneous}
Let \(m \in \Nset \setminus \set{0}\). If \(w \in W^{1, m + 1} (\Sset^{m}_\rho, \Rset^\nu)\) and if we define \(W : \Bset^{m + 1}_\rho \to \Rset^\nu\) for each \(x \in \Bset^{m + 1}_\rho \setminus \set{0}\) by 
\begin{equation*}
 W (x) \defeq w (\tfrac{\rho}{\abs{x}} x),
\end{equation*}
then \(W \in W^{1, 1} (\Bset^{m+1}_\rho, \Rset^\nu)\) and for every \(t \in \intvo{0}{\infty}\),
\begin{equation}
\label{eq_ao7iuceghaiphahVie8Quoor}
\mathcal{L}^{m + 1} \brk[\big]{\set[\big]{x \in \Bset_\rho^{m + 1} \st \abs{\Deriv W (x)}> t }}
  \le \frac{\rho}{(m + 1) t^{m +1}}  \int_{\Sset_\rho^{m}} \abs{\Deriv w}^{m + 1}.
\end{equation}
\end{lemma}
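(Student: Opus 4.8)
The plan is to work in polar coordinates: the statement reduces to a pointwise identity for \(\abs{\Deriv W}\) together with an elementary one-variable integral.

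First I would determine \(\Deriv W\) away from the origin. Writing \(x = r\theta\) with \(r = \abs{x}\) and \(\theta \in \Sset^m\), one has \(W(r\theta) = w(\rho\theta)\), so \(W\) is constant along radii. The map \(x \mapsto \rho x/\abs{x}\) is smooth on \(\Bset^{m + 1}_\rho \setminus \set{0}\) and is a submersion onto \(\Sset^m_\rho\); its differential at \(x = r\theta\) annihilates the radial direction and acts on \(\theta^\perp\) as \(\rho/r\) times a linear isometry onto \(T_{\rho\theta}\Sset^m_\rho\). By the chain rule for the composition of a Sobolev map on \(\Sset^m_\rho\) with such a submersion, \(W \in W^{1, 1}_{\mathrm{loc}}(\Bset^{m + 1}_\rho \setminus \set{0}, \Rset^\nu)\) and
\[
  \abs{\Deriv W (r\theta)} = \frac{\rho}{r}\,\abs{\Deriv w (\rho\theta)}
  \qquad \text{for a.e.\ } r \in \intvo{0}{\rho},\ \theta \in \Sset^m,
\]
where \(\Deriv w\) denotes the intrinsic gradient of \(w\) on \(\Sset^m_\rho\).

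Next I would prove the weak-type bound. For fixed \(\theta\), the radii with \(0 < r < \rho\) and \(\tfrac{\rho}{r}\abs{\Deriv w(\rho\theta)} > t\) are exactly those with \(r < \min\!\brk[\big]{\rho, \rho\abs{\Deriv w(\rho\theta)}/t}\), so, using \(\dif x = r^m \dif r \dif\mathcal{H}^m(\theta)\) for \(x = r\theta\),
\[
  \mathcal{L}^{m + 1}\brk[\big]{\set{\abs{\Deriv W} > t}}
  = \int_{\Sset^m} \frac{1}{m + 1}\min\!\brk[\big]{\rho,\tfrac{\rho\abs{\Deriv w(\rho\theta)}}{t}}^{m + 1} \dif\mathcal{H}^m(\theta)
  \le \frac{\rho^{m + 1}}{(m + 1)\,t^{m + 1}} \int_{\Sset^m} \abs{\Deriv w(\rho\theta)}^{m + 1} \dif\mathcal{H}^m(\theta),
\]
the inequality holding term by term since \(\min(\rho, \rho s/t)^{m + 1} \le (\rho s/t)^{m + 1}\) for every \(s \ge 0\). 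Since \(\int_{\Sset^m}\abs{\Deriv w(\rho\theta)}^{m + 1}\dif\mathcal{H}^m(\theta) = \rho^{-m}\int_{\Sset^m_\rho}\abs{\Deriv w}^{m + 1}\), this is precisely \eqref{eq_ao7iuceghaiphahVie8Quoor}.

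It remains to upgrade \(W \in W^{1,1}_{\mathrm{loc}}(\Bset^{m + 1}_\rho \setminus \set{0})\) to \(W \in W^{1,1}(\Bset^{m + 1}_\rho, \Rset^\nu)\), i.e.\ to remove the origin. The same polar computation gives \(\int_{\Bset^{m + 1}_\rho}\abs{\Deriv W} = \frac{\rho}{m}\int_{\Sset^m_\rho}\abs{\Deriv w}\) and \(\int_{\Bset^{m + 1}_\rho}\abs{W} = \frac{\rho}{m + 1}\int_{\Sset^m_\rho}\abs{w}\), both finite since \(w \in W^{1, m + 1}(\Sset^m_\rho, \Rset^\nu) \subseteq W^{1, 1}(\Sset^m_\rho, \Rset^\nu)\) on the compact manifold \(\Sset^m_\rho\). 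Because \(m + 1 \ge 2\), a point is \(W^{1,1}\)-removable: multiplying \(W\) by cutoffs vanishing on \(\Bset^{m + 1}_\varepsilon\), equal to \(1\) outside \(\Bset^{m + 1}_{2\varepsilon}\) and with gradient \(O(1/\varepsilon)\), one has \(\tfrac{1}{\varepsilon}\int_{\Bset^{m + 1}_{2\varepsilon}}\abs{W} = O(\varepsilon^m) \to 0\) and \(\int_{\Bset^{m + 1}_{2\varepsilon}}\abs{\Deriv W} \to 0\) by absolute continuity of the integral, so these products converge to \(W\) in \(W^{1,1}(\Bset^{m + 1}_\rho, \Rset^\nu)\), yielding the global weak derivative given by the pointwise formula above. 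The only step requiring genuine care is this chain-rule-cum-weak-differentiability argument (in particular its validity up to the origin); the rest is elementary polar-coordinate bookkeeping.
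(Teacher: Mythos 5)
Your proof is correct and takes essentially the same route as the paper's: the scaling identity \(\abs{\Deriv W (r\theta)} = \tfrac{\rho}{r}\abs{\Deriv w (\rho\theta)}\) followed by a Fubini computation in polar coordinates (the paper organizes this via the distribution function of \(\abs{\Deriv w}\), you integrate radially for fixed angle; the two are equivalent). You are in fact more explicit than the paper about the \(W^{1,1}\)-removability of the origin via the cutoff argument, which the paper leaves implicit.
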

\begin{proof}
It can be observed immediately that \(W \in W^{1, m + 1} (\Bset^{m+1}_\rho \setminus \Bset^{m+1}_\varepsilon, \Rset^\nu)\) for every \(\varepsilon \in \intvo{0}{\rho}\) and that for every \(x \in \Bset^{m + 1}_{\rho} \setminus \set{0}\), one has 
\begin{equation*}
 \abs{\Deriv W (x)} = \tfrac{\rho}{\abs{x}} \abs{\Deriv w (\tfrac{\rho}{\abs{x}} x)}.
\end{equation*}
Hence, using Fubini's theorem, we have 
\begin{equation*}
\begin{split}
 \mathcal{L}^{m + 1} \brk[\big]{\set[\big]{x \in \Bset_\rho^{m + 1} \st \abs{\Deriv W (x)}> t }}
 &= 
 \int_0^\rho  
 \mathcal{H}^{m} \brk[\big]{\set[\big]{x \in \Sset^m_r \st \abs{\Deriv w (\tfrac{\rho}{r} x)} \ge \tfrac{t r}{\rho}}} \dif r \\
 &= \int_0^\rho  
 \mathcal{H}^{m} \brk[\big]{\set[\big]{y \in \Sset^m_\rho \st \abs{\Deriv w (y)} \ge \tfrac{t r}{\rho}}} \brk[\big]{\tfrac{r}{\rho}}^{m}\dif r \\
  &= \frac{\rho}{t^{m +1}}\int_0^t
 \mathcal{H}^{m} \brk[\big]{\set[\big]{y \in \Sset^m_\rho \st \abs{\Deriv w (y)} \ge \tau}} \tau^m \dif \tau\\
 &\le \frac{\rho}{t^{m +1}}\int_0^\infty 
 \mathcal{H}^{m} \brk[\big]{\set[\big]{y \in \Sset^m_\rho \st \abs{\Deriv w (y)} \ge \tau}} \tau^m \dif \tau\\
 &=\frac{\rho}{t^{m+1}}\int_{0}^{\infty} \mathcal{H}^{m} \brk[\big]{\set[\big]{y \in \Sset^m_\rho \st \abs{\Deriv w (y)}^{m+1} \ge \tau}}  \frac{\dif \tau}{m+1}\\
 &= \frac{\rho}{(m + 1) t^{m +1}}  \int_{\Sset_\rho^{m}} \abs{\Deriv w(x)}^{m + 1} \dif x.
\end{split}
\end{equation*}
This yields \eqref{eq_ao7iuceghaiphahVie8Quoor} and completes our proof of Lemma~\ref{lemma_homogeneous}.
\end{proof}

\subsection{Proofs of the theorems}

We first construct and estimate the singular extension on the half-space (\cref{theorem_halfspace}).

\begin{proof}[Proof of \cref{theorem_halfspace}]
\resetconstant
We fix \(\delta_{\manifold{N}} \in \intvo{0}{\infty}\) so that the nearest-point retraction \(\Pi_{\manifold{N}} : \manifold{N} + \Bset^\nu_{\delta_{\manifold{N}}} \to \manifold{N}\) is well defined and smooth up to the boundary.
We take \(V : \Rset^{m + 1}_+ \to \Rset^\nu\) to be an extension by convolution of \(u\) to \(\Rset^{m + 1}_{+}\) as in \eqref{eq_shee0Ceingooghe2weefei4A}.

Since by assumption \(u \in \manifold{N}\) almost everywhere on \(\Rset^m\), by the averaged estimate on the distance to the target (see
\cref{proposition_goodcubes_gapintegral}), we have
\begin{multline}\label{eq_kfmrefijiu4ugntvri}
   \int_1^\lambda \sum_{k \in \Zset} \int_{\intvc{0}{1}^{m}}
  \# \set[\Big]{Q \in \mathcal{Q}^+_{\lambda, \tau, k, h} \st \sup_{x \in \partial Q} 
   \dist (V (x), \manifold{N}) \ge \delta_{\manifold{N}}/2} \dif h \frac{\dif \tau}{\tau}\\
 \le \Cl{cst_yiek0OoRe0hielie4pa5shee}  \smashoperator{\iint_{\substack{(y, z) \in \Rset^{m} \times \Rset^{m}\\ d (u(y), u(z)) \ge \eta\delta_{\mathcal{N}}/2}}} \frac{1}{\abs{y - z}^{2 m}} \dif y \dif z,
 \end{multline}
where \(\eta>0\) is the constant of Proposition~\ref{proposition_goodcubes_gapintegral} depending only on \(m\), the constant \(\Cr{cst_yiek0OoRe0hielie4pa5shee} \in \intvo{0}{\infty}\) depends only on $m$, $\mathcal{N}$, and we have also used that $\mathcal{N}$ is compact, namely $\diam(\mathcal{N})<\infty$. We set \(\delta\defeq \eta \delta_{\mathcal{N}}/2\). Taking
\begin{equation}
\label{eq_quoh4pooKo4wohtequ6heoju}
 \lambda \defeq 1 + \exp \brk[\Bigg]{2 \Cr{cst_yiek0OoRe0hielie4pa5shee}  \smashoperator{\iint_{\substack{(y, z) \in \Rset^{m} \times \Rset^{m}\\ d (u(y), u(z)) \ge \delta}}} \frac{1}{\abs{y - z}^{2 m}} \dif y \dif z}
\end{equation}
and using \eqref{eq_kfmrefijiu4ugntvri}, we have then 
\begin{equation}
\label{eq_pazaezuc4Aisielei9oowee6}
   \frac{1}{\ln \lambda} \int_1^\lambda \sum_{k \in \Zset} \int_{\intvc{0}{1}^{m}}
  \# \set[\Big]{Q \in \mathcal{Q}^+_{\lambda, \tau, k, h} \st \sup_{x \in \partial Q} 
   \dist (V (x), \manifold{N}) \ge \delta_{\mathcal{N}}/2} \dif h \frac{\dif \tau}{\tau}
 \le \frac{1}{2}.
 \end{equation}
Similarly, by \cref{proposition_goodcubes_sobolev},
 \begin{multline}
 \label{eq_Sei5aiph6ahva9Zie2ooyika}
   \frac{1}{\ln \lambda}\int_1^\lambda \sum_{k \in \Zset} \int_{\intvc{0}{1}^{m}}
  \sum_{Q \in \mathcal{Q}^+_{\lambda, \tau, k,h}}
  \sup_{x \in \partial Q} x_{m +1}^{m + 1} \abs{\Deriv V (x)}^{m + 1}  \dif h \frac{\dif \tau}{\tau}\\
 \le \frac{\C}{\ln \lambda} \smashoperator{\iint_{\Rset^{m} \times \Rset^{m}}} \frac{d \brk{u (y), u (z)}^{m + 1}}{\abs{y - z}^{2 m}} \dif y \dif z.
\end{multline} 
In view of the estimates \eqref{eq_pazaezuc4Aisielei9oowee6} and \eqref{eq_Sei5aiph6ahva9Zie2ooyika}, there exists \(\tau \in \intvo{1}{\lambda}\) and, for every \(k \in \Zset\), \(h_k \in \intvc{0}{1}^m\) such that for every \(k \in \Zset\), every \(Q \in \mathcal{Q}^+_{\lambda, \tau, k, h_k}\) and every \(x \in \partial Q\), one has 
\begin{equation}
\label{eq_sheeceeGh5eeyohLeineiwuC}
 \dist (V (x), \manifold{N}) \le \frac{\delta_{\manifold{N}}}{2}
\end{equation}
and
\begin{equation}
 \label{eq_eghoh1oe0nuchiev6lahPaey}
   \sum_{k \in \Zset} 
  \sum_{Q \in \mathcal{Q}^+_{\lambda, \tau, k, h_k}}
  \sup_{x \in \partial Q} x_{m + 1}^{m + 1} \abs{\Deriv V (x)}^{m + 1}
\le \frac{\Cl{esrnvo3i4j93juvirhn}}{\ln \lambda} \smashoperator[r]{\iint_{\Rset^{m} \times \Rset^{m}}} \frac{d \brk{u (y), u (z)}^{m + 1}}{\abs{y - z}^{2 m}} \dif y \dif z.
\end{equation} 

We define now the set of good cubes
\begin{equation}\label{eq45iimio_good_cubes}
 \mathcal{G} \defeq 
  \set[\Big]{Q \in \mathcal{Q}^+_{\lambda, \tau, k, h_k} \st 
 \sup_{x \in \partial Q} x_{m +1}^{m + 1} \abs{\Deriv V(x)}^{m + 1}
  \le \mu \text{ and } k \in \Zset}
\end{equation}
and the set of bad cubes
\begin{equation}\label{eqmi5jj5njnhj_notgood_cubes}
 \mathcal{B} \defeq 
  \set[\Big]{Q \in \mathcal{Q}^+_{\lambda, \tau, k, h_k} \st \sup_{x \in \partial Q} x_{m +1}^{m + 1} \abs{\Deriv V(x)}^{m + 1}
  > \mu \text{ and } k \in \Zset},
\end{equation}
where \(\mu\) will be chosen in \eqref{eq_ugii0ce5oa6haiGhieb8oohe}. Clearly, any cube is either good or bad and thus 
\begin{equation*}
 \bigcup_{Q \in \mathcal{G} \cup \mathcal{B}} Q = \Rset^{m + 1}_+.
\end{equation*}
By \eqref{eq_eghoh1oe0nuchiev6lahPaey} and \eqref{eqmi5jj5njnhj_notgood_cubes},
\begin{equation}\label{eq_irj398493j4gu}
\begin{split}
  \# \mathcal{B}
  &\le  \frac{1}{\mu} \sum_{k \in \Zset} 
  \sum_{Q \in \mathcal{Q}^+_{\lambda, \tau, k, h_k}}
  \sup_{x \in \partial Q} x_{m + 1}^{m + 1} \abs{\Deriv V (x)}^{m + 1}\\
&\le \frac{\Cr{esrnvo3i4j93juvirhn}}{\mu \ln \lambda} \smashoperator{\iint_{\Rset^{m} \times \Rset^{m}}}
\frac{d \brk{u (y), u (z)}^{m + 1}}{\abs{y - z}^{2 m}} \dif y \dif z.
\end{split}
\end{equation}
Notice  also that for every \(k \in \Zset\) and every \(Q \in \mathcal{Q}^+_{\lambda, \tau, k, h_k}\), we have
\begin{equation}
\label{eq_yoh6Ahl3aiba6jie3xoj3ael}
\begin{split}
\tau \lambda^{-k} \int_{\partial Q} \abs{\Deriv V}^{m + 1}
 &\le \Cl{cst_ieceishu2Ahleej9aizoh7EN} (\tau \lambda^{-k})^{m + 1} \sup_{x \in \partial Q } \abs{\Deriv V(x)}^{m + 1}\\
 &\le \Cr{cst_ieceishu2Ahleej9aizoh7EN} (\lambda - 1)^{m + 1} \sup_{x \in \partial Q} x_{m +1}^{m + 1} \abs{\Deriv V(x)}^{m + 1},
\end{split} 
\end{equation}
since for every \(x \in Q\), \(x_{m +1} \ge \tau \lambda^{-k}/(\lambda - 1)\), in view of \eqref{dcubesf}.

We are now going to define a map \(W : \Rset^{m+1}_+ \to \manifold{N} + \Bset^{\nu}_{\delta_{\manifold{N}}}\) separately on \(\bigcup_{\mathcal{G}}\) and \(\bigcup_{\mathcal{B}}\).

For every \(Q \in \mathcal{G}\), we apply \cref{lemma_extension}, up to a suitable bi-Lipschitz homeomorphism between a ball and a cube (see \cite{Griepentrog_Hoppner_Kaiser_Rehberg_2008}*{Cor.~3}), to define the mapping \(W\) on \(Q\) as an extension of \(V \restr{ \partial Q}\).
We have, in view of \eqref{eq_aikahmi6foomei4teisu6ahY} and of \eqref{eq_yoh6Ahl3aiba6jie3xoj3ael},
\begin{equation}
\label{eq_waeLee9quoh8aphaew8Iej1b}
\begin{split}
\int_{Q} \abs{\Deriv W}^{m + 1}
 &\le \C \tau \lambda^{-k} \int_{\partial Q} \abs{\Deriv V}^{m + 1}\\
 &\le  \C (\lambda - 1)^{m + 1} \sup_{x \in \partial Q} x_{m +1}^{m + 1} \abs{\Deriv V(x)}^{m + 1},
\end{split}
\end{equation}
whereas by the triangle inequality, \eqref{eq_sheeceeGh5eeyohLeineiwuC}, \eqref{eq_ooGhohviu9koohe7zoogiPha}, \eqref{eq_yoh6Ahl3aiba6jie3xoj3ael} and \eqref{eq45iimio_good_cubes},
\begin{equation*}
  \dist(W (x), \manifold{N})
  \le \frac{\delta_\manifold{N}}{2} + \Cl{cst_thowuofoaZiizi2Ahw6hoowe} \mu^{\frac{1}{m+1}} (\lambda - 1).
\end{equation*}
Hence if
\begin{equation}
\label{eq_ugii0ce5oa6haiGhieb8oohe}
 \mu =
 \brk[\Bigg]{\frac{\delta_{\mathcal{N}}}{2 \Cr{cst_thowuofoaZiizi2Ahw6hoowe} (\lambda - 1)}}^{m + 1},
\end{equation}
we have for every \(x \in Q\), \(W (x) \in \manifold{N} + \Bset^{\nu}_{\delta_{\manifold{N}}}\). Using the Chebyshev inequality and \eqref{eq_waeLee9quoh8aphaew8Iej1b},
we obtain
\begin{equation}\label{macheormtbnoi}
t^{m+1}\mathcal{L}^{m+1}\brk{\set{x \in Q \st \abs{\Deriv W (x)} \ge t}} \le \C (\lambda - 1)^{m + 1} \sup_{x \in \partial Q} x_{m +1}^{m + 1} \abs{\Deriv V(x)}^{m + 1}.
\end{equation}

Next, we apply, up to a suitable bi-Lipschitz homeomorphism between a ball and a cube (see \cite{Griepentrog_Hoppner_Kaiser_Rehberg_2008}*{Cor.~3}), \cref{lemma_homogeneous} on every bad cube \(Q \in \mathcal{B}\) to define there \(W\) by homogeneous extension of \(V \restr{\partial Q}\) with respect to the barycenter of \(Q\).
We compute then for such a cube, in view of \eqref{eq_ao7iuceghaiphahVie8Quoor} and \eqref{eq_yoh6Ahl3aiba6jie3xoj3ael}, for every \(t \in \intvo{0}{\infty}\),
\begin{equation}
\label{eq_cheijo5gee6ahphieQui1sho}
\begin{split}
 \mathcal{L}^{m + 1} \brk{\set{x \in Q \st \abs{\Deriv W (x)} \ge t}}
 &\le \frac{\C}{ t^{m + 1}} \tau \lambda^{-k} \int_{\partial Q} \abs{\Deriv V}^{m + 1}\\
 &\le \frac{\C}{ t^{m + 1}} (\lambda - 1)^{m + 1} \sup_{x \in \partial Q} x_{m +1}^{m + 1} \abs{\Deriv V(x)}^{m + 1}.
\end{split}
\end{equation}
Combining the estimates \eqref{macheormtbnoi} and \eqref{eq_cheijo5gee6ahphieQui1sho}, we get
\begin{equation}
\label{eq_ohG1eoMooghaiwoi7bei7iep}
\begin{split}
t^{m + 1}\mathcal{L}^{m + 1} &\brk{\set{x \in \mathbb{R}^{m+1}_{+} \st \abs{\Deriv W (x)} \ge t}}\\
 &\le \C (\lambda - 1)^{m + 1} \sum_{k \in \Zset} \sum_{Q \in \mathcal{Q}^+_{\lambda, \tau, k, h_k}} \sup_{x \in \partial Q} x_{m +1}^{m + 1} \abs{\Deriv V(x)}^{m + 1}\\
 &\le \Cl{3k4ri934jf9j5guj4ugh} \exp \brk[\Bigg]{\hspace{1em}\Cl{j3fon49ijf9jgfu95guinh}
  \smashoperator{
    \iint_{\substack{
      (y, z) \in \Rset^m \times \Rset^m\\
      d (u (y), u (z))\ge \delta
      }}}
 \frac{1}{\abs{y - z}^{2m}} \dif y \dif z
 }
 \smashoperator[r]{\iint_{\Rset^m \times \Rset^m}}
 \frac{d(u (y), u(z))^{m + 1}}{\abs{y - z}^{2m}} \dif y \dif z,
 \end{split}
\end{equation}
in view of \eqref{eq_quoh4pooKo4wohtequ6heoju} and \eqref{eq_eghoh1oe0nuchiev6lahPaey}, where $\Cr{3k4ri934jf9j5guj4ugh}$ and $\Cr{j3fon49ijf9jgfu95guinh}$ are positive constants depending only on $m$, $\mathcal{N}$.
Moreover, if \(S\) denotes the set of the barycenters of the cubes \(Q \in \mathcal{B}\), we have 
\begin{equation}
\label{eq_Aepaikoh6Lei6Hoh8pabuloe}
\# S = \# \mathcal{B}
\le  \Cl{m94j5g9hutgy4uihg} \exp \brk[\Bigg]{\hspace{1em} \Cl{n239hru4hfy84h5gyi5u4gnuin}
  \smashoperator{
    \iint_{\substack{
      (y, z) \in \Rset^m \times \Rset^m\\
      d (u (y), u (z))\ge \delta
      }}}
 \frac{1}{\abs{y - z}^{2m}} \dif y \dif z
 }
 \smashoperator[r]{\iint_{\Rset^m \times \Rset^m}}
 \frac{d(u (y), u(z))^{m + 1}}{\abs{y - z}^{2m}} \dif y \dif z,
\end{equation}
in view of \eqref{eq_quoh4pooKo4wohtequ6heoju}, \eqref{eq_irj398493j4gu} and \eqref{eq_ugii0ce5oa6haiGhieb8oohe}, where $\Cr{m94j5g9hutgy4uihg}$ and $\Cr{n239hru4hfy84h5gyi5u4gnuin}$ are positive constants depending only on $m$, $\mathcal{N}$.

In order to check that \(u\) is the trace of \(W\), we observe that on the one hand we have, since \(V\) is an extension by convolution, for every \(Q \in \mathcal{G}\), by Poincaré's inequality, by \eqref{eq_xie9ohch4engee6Iekohf1Ie} and by \eqref{eq_yoh6Ahl3aiba6jie3xoj3ael},
\begin{equation}
\label{eq_ac6thaighahxahGh1mahb4ai}
\begin{split}
 \int_{Q}& \abs{V - W}^{m + 1}\\
 &\le \frac{\Cl{ni4rj3j4uhtu9h435n}}{\brk{\tau \lambda^{-k}}^m}
 \brk*{\int_{\partial Q} \int_{Q} \abs{V(x) - V (y)}^{m + 1 } \dif y \dif x
 + \int_{\partial Q} \int_{Q} \abs{W (x) - V (y)}^{m + 1 } \dif y \dif x
 }\\
 &\le \frac{\Cr{ni4rj3j4uhtu9h435n}}{(\tau \lambda^{-k})^m} \int_{Q} \int_{\partial Q} \abs{V(x) - V (y)}^{m + 1 } \dif x \dif y
 + \C (\tau \lambda^{-k})^{m + 2} \int_{\partial Q} \abs{\Deriv V}^{m + 1 }\\
 &\le \C(\tau \lambda^{-k})^{m + 1} \brk*{\int_{Q} \abs{\Deriv V}^{m + 1}
 +
  (\lambda - 1)^{m + 1} \sup_{x \in \partial Q} x_{m + 1}^{m + 1} \abs{\Deriv V (x)}^{m + 1}}.
\end{split}
\end{equation}
By the classical theory of traces, 
\begin{equation}
\label{eq_lahXef3uyieheeng3ohwahH0}
 \int_{\Rset^{m + 1}} \abs{\Deriv V}^{m + 1} <\infty.
\end{equation}
It follows then from \eqref{eq_eghoh1oe0nuchiev6lahPaey}, \eqref{eq_ac6thaighahxahGh1mahb4ai} and \eqref{eq_lahXef3uyieheeng3ohwahH0} that 
\begin{equation}\label{eq_knreinebir9043rj4iuhu}
\sum_{k \in \mathbb{Z}} \sum_{Q \in \mathcal{G}} \frac{1}{(\lambda \tau^{-k})^{m + 1}}
 \int_{Q} \abs{V - W }^{m + 1} < \infty.
\end{equation}
Thus, since the set \(\mathcal{B}\) is finite, in view of \eqref{eq_knreinebir9043rj4iuhu}, we have 
\begin{equation*}
 \lim_{k \to \infty}
 \frac{1}{(\lambda \tau^{-k})^{m + 1}}
 \sum_{Q \in \mathcal{Q}^{+}_{\lambda, \tau, k, h_k}} \int_{Q} \abs{V - W }^{m + 1}\dif x = 0.
\end{equation*}
This implies that \(W\) and \(V\) have the same trace, and hence \(u\) is the trace of \(W\).

Finally, we define  \(U \defeq \Pi_{\manifold{N}} \compose W\).
The map \(U\) also has \(u\) as the trace on \(\Rset^m \times \set{0}\); the conclusion \eqref{eq_aiD8Oob2ahdah8lae0einei6} follows from \eqref{eq_ohG1eoMooghaiwoi7bei7iep};
\(U\in C (\Rset^{m + 1}_{+} \setminus S, \manifold{N})\) with the cardinality of the singular set \(S\) being estimated by \eqref{eq_Aepaikoh6Lei6Hoh8pabuloe}. This completes our proof of Theorem~\ref{theorem_halfspace}.
\end{proof}

The proof of \cref{theorem_halfspace} can be adapted easily to the case of the hyperbolic space (\cref{theorem_hyperbolic}).

\begin{proof}%
[Proof of \cref{theorem_hyperbolic}]%
\resetconstant
We proceed as in the proof of \cref{theorem_halfspace}.
Instead of \eqref{eq_cheijo5gee6ahphieQui1sho}, we proceed using the Poincaré metric on the half-space,  \eqref{eq_ao7iuceghaiphahVie8Quoor},  \eqref{eq_yoh6Ahl3aiba6jie3xoj3ael} and get for every \(Q \in \mathcal{Q}^{+}_{\lambda, \tau, k, h}\) (see \eqref{dcubesf}),
\begin{equation*}
\label{eq_UunifeeleaKaeg8ui6eezuej}
\begin{split}
 \smashoperator[r]{\int_{\substack{x \in Q\\ \abs{\Deriv W (x)}x_{m +1} \ge t}}}
 \frac{1}{x_{m +1}^{m + 1}} \dif x
 &\le \brk[\Big]{\frac{(\lambda - 1)\lambda^k}{\tau}}^{m + 1}
 \mathcal{L}^{m + 1}\brk{\set{x \in Q \st \abs{\Deriv W (x)}\tfrac{\tau \lambda^{-(k - 1)}}{\lambda - 1} \ge t}}\\[-1em]
 &\le \frac{\C \lambda^{m + 1}}{t^{m + 1}}  \tau \lambda^{-k} \int_{\partial Q} |\Deriv V|^{m+1}\\
 &\le \frac{\Cl{goodconst21234567} (\lambda-1)^{m + 1}}{t^{m + 1}}
 (\tau \lambda^{-k})^{m+1}  \sup_{x \in \partial Q} \abs{\Deriv V (x)}^{m + 1}\\
 &\le  \frac{\Cr{goodconst21234567} (\lambda-1)^{2m + 2}}{t^{m + 1}}  \sup_{x \in \partial Q} x^{m+1}_{m+1} \abs{\Deriv V (x)}^{m + 1},
\end{split}
\end{equation*}
where \(\lambda \geq 2\), \(V\) is defined in \eqref{eq_shee0Ceingooghe2weefei4A} and, up to a suitable bi-Lipschitz homeomorphism between a ball and a cube (see \cite{Griepentrog_Hoppner_Kaiser_Rehberg_2008}*{Cor.~3}), \(W\) is defined on \(Q\) by homogeneous extension of \(V \restr{\partial Q}\) with respect to the barycenter of \(Q\). Observe that the constant \(\Cr{goodconst21234567} \in (0, \infty)\) depends only on \(m\). The remainder of the proof is similar.
\end{proof}

The case of the singular extension to the ball is slightly more complicated, as we will rely on the parameterization of \(\Bset^{m + 1}_{1}\) by \(\Rset^{m + 1}_+\) through a classical suitable conformal mapping.

\begin{proof}%
[Proof of \cref{theorem_ball}]%
\resetconstant%
We recall that the map 
\begin{equation*}
 \Psi (x) \defeq \frac{4(x + e)}{\abs{x + e}^2} - 2e,
\end{equation*}
where \(e\defeq (0, \dotsc, 0, 1) \in \mathbb{R}^{m+1}_{+}\), defines a diffeomorphism from \(\Bset^{m + 1}_{1}\) to \(\Rset_+^{m + 1}\).
Indeed, if \(x \in \bar{\Bset}^{m+1}_{1} \setminus \{e\}\), then 
\begin{equation*}
 e \cdot \Psi (x) 
 = \frac{2 - 2\abs{x}^2}{\abs{x + e}^2}.
\end{equation*}
Moreover, we have
\begin{equation*}
 \Psi^{-1} (x)
 = \frac{4(x + 2e)}{\abs{x + 2e}^2} - e.
\end{equation*}

In particular, the ball \(\Bset^{m + 1}_{1}\) is isometric to the half-space \(\Rset^{m + 1}_+\) endowed with the metric \(g\) defined for \(x \in \Rset^{m + 1}_+\) and \(v \in \Rset^{m + 1}\) by 
\begin{equation*}
 g (x)[v, v]
 = \frac{16 \abs{v}^2}{\abs{x + 2e}^4}.
\end{equation*}
Moreover, since \(\Psi\) is a conformal map and \(u : \Sset^m \to \manifold{N}\), one has 
\begin{equation*}
 \smashoperator[r]{\iint_{\Sset^m \times \Sset^m}} 
  \frac{d (u(x), u(y))^p}{\abs{x - y}^{2m}} \dif x \dif y 
 = \smashoperator{\iint_{\Rset^m \times \Rset^m}} \frac{d (u(\Psi^{-1} (x)), u(\Psi^{-1}(y)))^p}{\abs{x - y}^{2m}} \dif x \dif y 
\end{equation*}
and 
\begin{equation*}
\smashoperator{\iint_{\substack{x, y \in \Sset^m\\ d(u (x), u (y))\ge \delta}}} \frac{d (u(x), u(y))^p}{\abs{x - y}^{2m}} \dif x \dif y 
= \smashoperator{\iint_{\substack{x, y \in \Rset^m\\ d(u (\Psi^{-1} (x)), u (\Psi^{-1}(y)))\ge \delta}}} \frac{d (u(\Psi^{-1} (x)), u(\Psi^{-1}(y)))^p}{\abs{x - y}^{2m}} \dif x \dif y 
 \end{equation*}
(the reader may also consult \cite{Ahlfors_1981}*{discussion after I,\, (15)}).
%Given \(u : \Sset^m \to \manifold{N}\), 
We proceed then as in the proof of \cref{theorem_halfspace}, using \(u \compose \Psi^{-1}\) instead of \(u\).
In order to replace the estimate \eqref{eq_cheijo5gee6ahphieQui1sho}, we proceed as follows.
 
Given \(Q \in \mathcal{Q}_{\lambda, \tau, k, h}^+\) and setting 
\begin{align*}
 m_Q &= \inf_{x \in Q} \frac{4}{\abs{x + 2e}^2}&
 &\text{and} &
 M_Q &= \sup_{x \in Q} \frac{4}{\abs{x + 2e}^2},
\end{align*}
we have 
\begin{equation}
\label{eq_maechuquoo1aiQuai6Eig4Ue}
\begin{split}
 \smashoperator[r]{\int_{\substack{x \in Q\\ \abs{\Deriv W (x)} \ge 4t/\abs{x + 2e}^2}}}
 \frac{4^{m + 1}}{\abs{x + 2e}^{2m + 2}} \dif x
 &\le M_Q^{m + 1} \mathcal{L}^{m + 1}\brk{\set{x \in Q \st \abs{\Deriv W (x)} \ge t m_Q}}\\
 &\le \frac{\C}{t^{m + 1}}  \brk[\Big]{\frac{M_Q}{m_Q}}^{m + 1} \tau \lambda^{-k} \int_{\partial Q} |\Deriv V|^{m+1}\\
 &\le \frac{\Cl{constantC2intheorem1}}{t^{m+1}} \brk[\Big]{\frac{M_Q}{m_Q}}^{m + 1} (\tau \lambda^{-k})^{m+1} \sup_{x \in \partial Q} |\Deriv V(x)|^{m+1}\\
 &\le  \frac{\Cr{constantC2intheorem1}(\lambda-1)^{m+1}}{t^{m+1}} \brk[\Big]{\frac{M_Q}{m_Q}}^{m + 1}  \sup_{x \in \partial Q} x^{m+1}_{m+1} |\Deriv V(x)|^{m+1},
\end{split}
\end{equation}
where \(\lambda \geq 2\), \(V\) is defined in \eqref{eq_shee0Ceingooghe2weefei4A} and, up to a suitable bi-Lipschitz homeomorphism between a ball and a cube (see \cite{Griepentrog_Hoppner_Kaiser_Rehberg_2008}*{Cor.~3}), \(W\) is defined on \(Q\) by homogeneous extension of \(V \restr{\partial Q}\) with respect to the barycenter of \(Q\). It is worth noting that the constant \(\Cr{constantC2intheorem1}  \in (0, \infty)\) depends only on \(m\). 
We observe now that if \(x, y \in Q\), then 
\begin{equation}
\label{eq_ugh7ne9aeLieri8iepaiVosh}
 \frac{\abs{x + 2e}}{\abs{y + 2e}}
 \le 1 + \frac{\abs{x - y}}{\abs{y + 2e}}
 \le 1 + \frac{\tau \lambda^{-k}\sqrt{m+1}}{2+\frac{\tau \lambda^{-k}}{\lambda-1}}
 \le 1 + (\lambda - 1)\sqrt{m+1},
\end{equation}
and we have thus by \eqref{eq_maechuquoo1aiQuai6Eig4Ue} and \eqref{eq_ugh7ne9aeLieri8iepaiVosh},
\begin{multline*}
\label{eq_Cie0Eikeid7IeG7Quee4Sam1}
\smashoperator[r]{\int_{\substack{x \in Q\\ \abs{\Deriv W (x)} \ge 4t/\abs{x + 2e}^2}}}
 \frac{4^{m + 1}}{\abs{x + 2e}^{2m + 2}} \dif x\\
 \le  \frac{\Cr{constantC2intheorem1}\brk{\lambda - 1}^{m + 1}}{t^{m + 1}} 
 \brk*{1 + (\lambda - 1)\sqrt{m+1}}^{2m + 2} \sup_{x \in \partial Q} x_{m +1}^{m + 1}\abs{\Deriv V (x)}^{m + 1}.
\end{multline*}
The rest of the proof is similar to the proof of \cref{theorem_halfspace}.
\end{proof}

\begin{bibdiv}
  \begin{biblist}
  
\bib{Ahlfors_1981}{book}{
    author = {Ahlfors, Lars V.},
     title = {M\"obius transformations in several dimensions},
    series = {Ordway Professorship Lectures in Mathematics},
 publisher = {University of Minnesota, School of Mathematics, Minneapolis,
              MN},
      year = {1981},
     pages = {ii+150},
}

\bib{Bethuel_2014}{article}{
    author={Bethuel, Fabrice},
    title={A new obstruction to the extension problem for Sobolev maps between manifolds},
    journal={J. Fixed Point Theory Appl.},
    volume={15},
    date={2014},
    number={1},
    pages={155--183},
    issn={1661-7738},
%     review={\MR{3282786}},
    doi={10.1007/s11784-014-0185-0},
}

\bib{Bethuel_Demengel_1995}{article}{
    author={Bethuel, F.},
    author={Demengel, F.},
    title={Extensions for Sobolev mappings between manifolds},
    journal={Calc. Var. Partial Differential Equations},
    volume={3},
    date={1995},
    number={4},
    pages={475--491},
    issn={0944-2669},
%     review={\MR{1385296}},
    doi={10.1007/BF01187897},
}

\bib{Bourgain_Brezis_Mironescu_2005_CPAM}{article}{
   author={Bourgain, Jean},
   author={Brezis, Ha\"{\i}m},
   author={Mironescu, Petru},
   title={Lifting, degree, and distributional Jacobian revisited},
   journal={Comm. Pure Appl. Math.},
   volume={58},
   date={2005},
   number={4},
   pages={529--551},
   issn={0010-3640},
   doi={10.1002/cpa.20063},
}    

\bib{Bourgain_Brezis_Nguyen_2005_CRAS}{article}{
   author={Bourgain, Jean},
   author={Brezis, Ha\"\i m},
   author={Nguyen, Hoai-Minh}*{inverted={yes}},
   title={A new estimate for the topological degree},
   journal={C. R. Math. Acad. Sci. Paris},
   volume={340},
   date={2005},
   number={11},
   pages={787--791},
   issn={1631-073X},
   doi={10.1016/j.crma.2005.04.007},
}

\bib{Brezis_Nguyen_2011}{article}{
   author={Brezis, Ha\"{i}m},
   author={Nguyen, Hoai-Minh}*{inverted={yes}},
   title={On a new class of functions related to \(\mathrm{VMO}\)},
   journal={C. R. Math. Acad. Sci. Paris},
   volume={349},
   date={2011},
   number={3-4},
   pages={157--160},
   issn={1631-073X},
   doi={10.1016/j.crma.2010.11.026},
}

\bib{Brezis_Nirenberg_1995}{article}{
    author={Brezis, H.},
    author={Nirenberg, L.},
    title={Degree theory and BMO. I. Compact manifolds without boundaries},
    journal={Selecta Math. (N.S.)},
    volume={1},
    date={1995},
    number={2},
    pages={197--263},
    issn={1022-1824},
%     review={\MR{1354598}},
    doi={10.1007/BF01671566},
}

\bib{diBenedetto_2016}{book}{
    author={DiBenedetto, Emmanuele},
    title={Real analysis},
    series={Birkh\"{a}user Advanced Texts: Basler Lehrb{\"u}cher},
    edition={2},
    publisher={Birkh\"{a}user/Springer}, 
    address={New York},
    date={2016},
    pages={xxxii+596},
    isbn={978-1-4939-4003-5},
    isbn={978-1-4939-4005-9},
    %    review={\MR{3560412}},
    doi={10.1007/978-1-4939-4005-9},
}

\bib{Gagliardo_1957}{article}{
   author={Gagliardo, Emilio},
   title={Caratterizzazioni delle tracce sulla frontiera relative ad alcune
   classi di funzioni in $n$ variabili},
%    language={Italian},
   journal={Rend. Sem. Mat. Univ. Padova},
   volume={27},
   date={1957},
   pages={284--305},
   issn={0041-8994},
%    review={\MR{0102739}},
}

\bib{Griepentrog_Hoppner_Kaiser_Rehberg_2008}{article}{
	author={Griepentrog, J. A.},
	author={H\"{o}ppner, W.}, 
	author={Kaiser, H.-C.}, 
	author={Rehberg, J.},
	title={A bilipschitz continuous, volume preserving map from the unit ball onto a cube},
	journal={Note Mat.},
	volume={28},
	number={1},
	pages={177-193},
	year={2008},
	doi={10.1285/i15900932v28n1p177},
}

\bib{Hardt_Lin_1987}{article}{
    author={Hardt, Robert},
    author={Lin, Fang-Hua},
    title={Mappings minimizing the $L^p$ norm of the gradient},
    journal={Comm. Pure Appl. Math.},
    volume={40},
    date={1987},
    number={5},
    pages={555--588},
    issn={0010-3640},
  %    review={\MR{896767}},
    doi={10.1002/cpa.3160400503},
}

\bib{Function}{book}{
	author={Kufner, A.},
	author={John, O.},
	author={Fu\v{c}ik, S.},
	title={Function spaces},
	publisher={Noordhoff}, 
	address={Leyden},
    date={1977},
}

\bib{Leoni_2023}{book}{
    author={Leoni, Giovanni},
    title={A first course in fractional Sobolev spaces},
    series={Graduate Studies in Mathematics},
    volume={229},
    publisher={American Mathematical Society}, 
    address={Providence, R.I.},
    date={2023},
    pages={xv+586},
    doi={10.1090/gsm/229},
}

\bib{Mazya_2011}{book}{
    author={Maz'ya, Vladimir},
    title={Sobolev spaces with applications to elliptic partial differential
    equations},
    series={Grundlehren der Mathematischen Wissenschaften},
    volume={342},
    edition={Second, revised and augmented edition},
    publisher={Springer, Heidelberg},
    date={2011},
    pages={xxviii+866},
    isbn={978-3-642-15563-5},
%    review={\MR{2777530}},
    doi={10.1007/978-3-642-15564-2},
}

\bib{Mironescu_VanSchaftingen_2021_Trace}{article}{
    author={Mironescu, Petru},
    author={Van Schaftingen, Jean},
    title={Trace theory for Sobolev mappings into a manifold},
  %    language={English, with English and French summaries},
    journal={Ann. Fac. Sci. Toulouse Math. (6)},
    volume={30},
    date={2021},
    number={2},
    pages={281--299},
    issn={0240-2963},
%     review={\MR{4297380}},
    doi={10.5802/afst.1675},
}

\bib{Nash_1956}{article}{
   author={Nash, John},
   title={The imbedding problem for Riemannian manifolds},
   journal={Ann. of Math. (2)},
   volume={63},
   date={1956},
   pages={20--63},
   issn={0003-486X},
%    review={\MR{75639}},
   doi={10.2307/1969989},
}

\bib{Nguyen_2006}{article}{
   author={Nguyen, Hoai-Minh}*{inverted={yes}},
   title={Some new characterizations of Sobolev spaces},
   journal={J. Funct. Anal.},
   volume={237},
   date={2006},
   number={2},
   pages={689--720},
   issn={0022-1236},
   doi={10.1016/j.jfa.2006.04.001},
}

\bib{Nguyen_2007_CRAS}{article}{
   author={Nguyen, Hoai-Minh}*{inverted={yes}},
   title={$\Gamma$-convergence and Sobolev norms},
   journal={C. R. Math. Acad. Sci. Paris},
   volume={345},
   date={2007},
   number={12},
   pages={679--684},
   issn={1631-073X},
   doi={10.1016/j.crma.2007.11.005},
}
\bib{Nguyen_2007}{article}{
   author={Nguyen, Hoai-Minh}*{inverted={yes}},
   title={Optimal constant in a new estimate for the degree},
   journal={J. Anal. Math.},
   volume={101},
   date={2007},
   pages={367--395},
   issn={0021-7670},
   doi={10.1007/s11854-007-0014-0},
}

\bib{Nguyen_2008_CRAS}{article}{
   author={Nguyen, Hoai-Minh},
   title={Inequalities related to liftings and applications},
   journal={C. R. Math. Acad. Sci. Paris},
   volume={346},
   date={2008},
   number={17-18},
   pages={957--962},
   issn={1631-073X},
   doi={10.1016/j.crma.2008.07.026},
}

\bib{Nguyen_2008}{article}{
   author={Nguyen, Hoai-Minh}*{inverted={yes}},
   title={Further characterizations of Sobolev spaces},
   journal={J. Eur. Math. Soc. (JEMS)},
   volume={10},
   date={2008},
   number={1},
   pages={191--229},
   issn={1435-9855},
   doi={10.4171/JEMS/108},
}

\bib{Nguyen_2014}{article}{
   author={Nguyen, Hoai-Minh}*{inverted={yes}},
   title={Estimates for the topological degree and related topics},
   journal={J. Fixed Point Theory Appl.},
   volume={15},
   date={2014},
   number={1},
   pages={185--215},
   issn={1661-7738},
   doi={10.1007/s11784-014-0182-3},
}
       
\bib{Petrache_Riviere_2014}{article}{
    author={Petrache, Mircea},
    author={Rivi\`ere, Tristan},
    title={Global gauges and global extensions in optimal spaces},
    journal={Anal. PDE},
    volume={7},
    date={2014},
    number={8},
    pages={1851--1899},
    issn={2157-5045},
  %    review={\MR{3318742}},
    doi={10.2140/apde.2014.7.1851},
}

\bib{Petrache_VanSchaftingen_2017}{article}{
    author={Petrache, Mircea},
    author={Van Schaftingen, Jean},
    title={Controlled singular extension of critical trace Sobolev maps from
    spheres to compact manifolds},
    journal={Int. Math. Res. Not. IMRN},
    date={2017},
    number={12},
    pages={3647--3683},
    doi={10.1093/imrn/rnw109},
    issn={1073-7928},
}

\bib{VanSchaftingen_2020}{article}{
   author={Van Schaftingen, Jean},
   title={Estimates by gap potentials of free homotopy decompositions of
   critical Sobolev maps},
   journal={Adv. Nonlinear Anal.},
   volume={9},
   date={2020},
   number={1},
   pages={1214--1250},
   issn={2191-9496},
%    review={\MR{4042308}},
   doi={10.1515/anona-2020-0047},
}

\bib{VanSchaftingen_SumLift}{article}{
  author={Van Schaftingen, Jean},
  title={Lifting of fractional Sobolev mappings to noncompact covering spaces},
  journal={Ann. Inst. H. Poincaré Anal. Non Linéaire},
  doi={10.4171/AIHPC/98},
}

\bib{VanSchaftingen_Extension}{article}{
    author={Van Schaftingen, Jean},
    title={The extension of traces for Sobolev mappings},
    eprint={https://arxiv.org/abs/2403.18738},
}
  \end{biblist}
\end{bibdiv}

\end{document}